\newcommand\blfootnote[1]{%
  \begingroup
  \renewcommand\thefootnote{}\footnote{#1}%
  \addtocounter{footnote}{-1}%
  \endgroup
}
\newcommand{\delete}[1]{}
\newcommand{\red}[1]{#1}
\newcommand{\supp}{\operatorname{supp}}
\newcommand{\cA}{{\mathcal A}}
\newcommand{\cC}{{\mathcal C}}
\newcommand{\cB}{{\mathcal B}}
\newcommand{\cF}{{\mathcal F}}
\newcommand{\cP}{{\mathcal P}}
\newcommand{\cM}{{\mathcal M}}
\newcommand{\N}{{\mathbbm N}}
\newcommand{\Z}{{\mathbbm Z}}
\newcommand{\1}{{\mathbbm 1}}
\newcommand{\lcm}{\operatorname{lcm}}
\newcommand{\card}{\operatorname{card}}
\newcommand{\ddelta}{\boldsymbol{\delta}}
\newcommand{\oddelta}{\overline{\boldsymbol{\delta}}}
\newcommand{\uddelta}{\underline{\boldsymbol{\delta}}}
\newcommand{\dl}{\underline{d}}
\newcommand{\du}{\overline{d}}
\newcommand{\Spec}{\operatorname{Spec}}
\newtheorem {lemma}{Lemma}
\newtheorem{theorem}{Theorem}
\newtheorem {bemerkung}{Remark}
\newtheorem{proposition}{Proposition}
\newtheorem {corollary}{Corollary}
\newtheorem{beispiel}{Example}
\newtheorem{frage}{Question}
\newtheorem{vermutung}{Conjecture}
\newenvironment{remark} {\begin{bemerkung} \normalfont }{\end{bemerkung}}
\begin{document}


\title{Tautness for sets of multiples and applications to $\cB$-free dynamics}
\author{Gerhard Keller}
\affil{Department of Mathematics, University of Erlangen-N\"urnberg, \\
 Cauerstr. 11, 91058 Erlangen, Germany
 \par Email: keller@math.fau.de}
\date{Version of \today}


\maketitle

{\begin{abstract}
For any set $\cB\subseteq\N=\{1,2,\dots\}$ one can define its \emph{set of multiples}
$\cM_\cB:=\bigcup_{b\in\cB}b\Z$
and the set of \emph{$\cB$-free numbers}
$\cF_\cB:=\Z\setminus\cM_\cB$.
Tautness of the set $\cB$ is a basic property related to questions around the asymptotic density of $\cM_\cB\subseteq\Z$. 
From a dynamical systems point of view (originated in~\cite{Sa}) one studies $\eta$, the indicator function of $\cF_\cB\subseteq\Z$, its shift-orbit closure $X_\eta\subseteq\{0,1\}^\Z$ and the stationary probability measure $\nu_\eta$ defined on $X_\eta$ by the frequencies of finite blocks in $\eta$. In this paper we prove that
tautness implies the following two properties of $\eta$:
\begin{compactenum}[-]
\item The measure $\nu_\eta$ has full topological support in $X_\eta$.
\item If $X_\eta$ is proximal, i.e.~if the one-point set $\{\dots000\dots\}$ is contained in $X_\eta$ and is the unique minimal subset of $X_\eta$, then $X_\eta$ is hereditary, i.e.~if $x\in X_\eta$ and if $w$ is an arbitrary element of $\{0,1\}^\Z$, then also the coordinate-wise product $w\cdot x$ belongs to $X_\eta$.
\end{compactenum}
This strengthens two results from \cite{BKKL2015} which need the stronger assumption that $\cB$ has light tails for the same conclusions.
\end{abstract}}
\blfootnote{\emph{MSC 2010 classification:} 37A45, 37B05, 11B05.}
\blfootnote{\emph{Keywords:} $\cB$-free dynamics, sets of multiples, tautness, $0$-$1$-law.}

\section{Introduction and results}\label{sec:introduction}
For any given set $\cB\subseteq\N=\{1,2,\dots\}$ one can define its \emph{set of multiples}
\begin{equation*}
\cM_\cB:=\bigcup_{b\in\cB}b\Z
\end{equation*}
and the set of \emph{$\cB$-free numbers}
\begin{equation*}
\cF_\cB:=\Z\setminus\cM_\cB\ .
\end{equation*}
The investigation of structural properties of $\cM_\cB$ or, equivalently, of $\cF_\cB$ has a long history (see the monograph \cite{hall-book} and the recent paper \cite{BKKL2015} for references).
Properties of $\cB$ are closely related to properties of the shift dynamical system generated by the two-sided sequence
$\eta\in\{0,1\}^\Z$, the characteristic function of $\cF_\cB$. Indeed, topological dynamics and ergodic theory provide a wealth of concepts to describe various aspects of the structure of $\eta$, see \cite{Sa} which originated
 this point of view by studying the set of square-free numbers, and also
\cite{Peckner2012}, \cite{Ab-Le-Ru}, \cite{BKKL2015}, \cite{KKL2016} for later contributions.

\subsection{A new characterization of tautness} 
In this note we always assume that $\cB$ is \emph{primitive}, i.e. that there are no $b,b'\in\cB$ with $b\mid b'$. 
We recall some  notions from the theory of sets of multiples \cite{hall-book}
and also from \cite{KKL2016}.
\begin{itemize}
\item For a set of multiples $\cM_\cB$ denote by 
\begin{equation*}
\dl(\cM_\cB):=\liminf_{n\to\infty}\frac{1}{n}\sum_{k=1}^n\1_{\cM_\cB}(k)
\;\text{ and }\;
\du(\cM_\cB):=\limsup_{n\to\infty}\frac{1}{n}\sum_{k=1}^n\1_{\cM_\cB}(k)\end{equation*}
the lower and upper density, respectively, and by
\begin{equation*}
\ddelta(\cM_\cB):=\lim_{n\to\infty}\frac{1}{\log n}\sum_{k=1}^nk^{-1}\1_{\cM_\cB}(k)
\end{equation*}
the logarithmic density. Davenport and Erd\"os \cite{DE1936,DE1951} showed that the logarithmic density always exists,  that $\ddelta(\cM_\cB)=\dl(\cM_\cB)$.
\item The set $\cB\subseteq\N\setminus\{1\}$ is a \emph{Behrend set}, if $\ddelta(\cM_\cB)=1$ (in which case also $d(\cM_\cB)=1$).
\item The set $\cB$ is \emph{taut}, if 
\begin{equation*}
\ddelta(\cM_{\cB\setminus\{b\}})<\ddelta(\cM_\cB)\text{\; for each }b\in\cB.
\end{equation*}
So a set is taut, if removing any single point from it changes its set of multiples drastically and not only by ``a few points''. 
\item It is known \cite{hall-book} that $\cB$ is not taut if and only if it  contains a scaled copy of a Behrend set, i.e. if there are $r\in\N$ and a Behrend set $\cA$ such that $r\cA\subseteq\cB$.
\end{itemize}
The logarithmic density of sets of multiples has the following continuity property from below, which is a by-product of the proof of the Davenport-Erd\"os theorem \cite{DE1936,DE1951} (see also \cite{hall-book}):
\begin{equation}\label{eq:app-from-inside}
\ddelta(\cM_\cB)=\lim_{K\to\infty}d(\cM_{\cB\cap\{1,\dots,K\}}).
\end{equation}
At a first glance this property may seem rather close to the following one
\begin{equation}
\lim_{K\to\infty}\overline{d}\left(\cM_{\{b\in\cB:b>K\}}\right)=0,
\end{equation}
which was introduced in \cite{BKKL2015} under the name
\emph{light tails} in order to prove two subtle dynamical properties of the dynamical system associated in a natural way to the set $\cB$ - see the next section for details. 
However it turns out that light tails is definitively a stronger property than \eqref{eq:app-from-inside}. Indeed,
the authors of \cite{BKKL2015} show that each set $\cB$ with light tails is actually taut
and satisfies $\underline{d}(\cB)=\overline{d}(\cB)$, but that the converse does not hold~\cite[Thm.~4.20]{BKKL2015}. They conjecture that tautness might be a sufficient assumption to prove the two dynamical properties alluded to above. In this note we will show that this is indeed the case. A key ingredient to our proof is an apparently new equivalent characterization of Behrend sets in terms of a dichotomy:
\begin{theorem}\label{theo:behrend}
Let $\cB\subseteq\N$ be primitive and denote $\widetilde\cB^{(N)}:=\{b\in\cB:\Spec(b)\cap\{1,\dots,N\}=\emptyset\}$.
\begin{compactenum}[(i)]
\item $\cB$ is Behrend if and only if $\widetilde\cB^{(N)}$ is Behrend 
(i.e. $\ddelta(\cM_{\widetilde\cB^{(N)}})=1$) for all $N\in\N$.
\item $\cB$ is  not Behrend if and only if
\begin{equation}
\lim_{N\to\infty}\ddelta(\cM_{\widetilde{\cB}^{(N)}})=0.
\end{equation}
\end{compactenum}
\end{theorem}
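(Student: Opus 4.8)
The plan is to reduce the whole dichotomy to the behaviour of the non-increasing sequence $g(N):=\ddelta(\cM_{\widetilde\cB^{(N)}})$. Since $\widetilde\cB^{(N+1)}\subseteq\widetilde\cB^{(N)}\subseteq\cB$, the sets of multiples decrease, so $g$ is non-increasing and $g(N)\downarrow L$ for some $L\in[0,1]$, with $g(N)\le\ddelta(\cM_\cB)$ for every $N$, and $\widetilde\cB^{(1)}=\cB$ (no prime is $\le 1$). Hence (i) and (ii) together amount to the single statement that $g(N)=1$ for all $N$ when $\cB$ is Behrend and $L=0$ when it is not. The main tool will be a factorisation of free-set densities coming from the Chinese Remainder Theorem: every $b\in\widetilde\cB^{(N)}$ is coprime to every prime $p\le N$, so for any \emph{finite} $F\subseteq\widetilde\cB^{(N)}$ the periodic sets $\cF_F$ and $R_N:=\{n:\gcd(n,\prod_{p\le N}p)=1\}$ have coprime periods and their densities multiply, giving $1-d(\cM_{F\cup\{p\le N\}})=(1-d(\cM_F))\,\beta_N$ with $\beta_N:=\prod_{p\le N}(1-1/p)>0$ (here $\{p\le N\}$ are the primes up to $N$). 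Letting $F$ exhaust $\widetilde\cB^{(N)}$ and invoking the continuity from below \eqref{eq:app-from-inside} turns this into the identity $1-\ddelta(\cM_{\widetilde\cB^{(N)}\cup\{p\le N\}})=(1-g(N))\,\beta_N$.

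For part (i) I would split $\cB=\widetilde\cB^{(N)}\cup\{b\in\cB:\Spec(b)\cap\{1,\dots,N\}\neq\emptyset\}$ and note that each $b$ in the second set is a multiple of some prime $p\le N$, whence $\cM_\cB\subseteq\cM_{\widetilde\cB^{(N)}}\cup\cM_{\{p\le N\}}=\cM_{\widetilde\cB^{(N)}\cup\{p\le N\}}$ and therefore $\ddelta(\cM_\cB)\le\ddelta(\cM_{\widetilde\cB^{(N)}\cup\{p\le N\}})$. Combined with the factorisation this reads $(1-g(N))\,\beta_N\le 1-\ddelta(\cM_\cB)$. If $\cB$ is Behrend the right-hand side is $0$, and since $\beta_N>0$ this forces $g(N)=1$ for every $N$, which is exactly the non-trivial implication of (i); the converse is immediate from $\widetilde\cB^{(1)}=\cB$.

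Part (ii) I would prove in contrapositive form: if $L>0$ then $\cB$ is Behrend. The idea is to build a Behrend subset of $\cB$ supported on pairwise disjoint blocks of primes. Fix $0<\varepsilon<L$. Using \eqref{eq:app-from-inside} choose a finite $F_1\subseteq\widetilde\cB^{(N_1)}$ (for any $N_1$) with $d(\cM_{F_1})\ge L-\varepsilon$; being finite, its elements involve only primes in some range $(N_1,M_1]$. Put $N_2:=M_1$, choose a finite $F_2\subseteq\widetilde\cB^{(N_2)}$ with $d(\cM_{F_2})\ge L-\varepsilon$ using primes in $(N_2,M_2]$, and iterate. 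By construction the $F_i$ use pairwise disjoint prime sets, so the $\cF_{F_i}$ have pairwise coprime periods and their densities multiply; thus for $\cA:=\bigcup_iF_i$ and every $k$, $\du(\cF_\cA)\le d(\cF_{\bigcup_{i\le k}F_i})=\prod_{i\le k}(1-d(\cM_{F_i}))\le(1-(L-\varepsilon))^k\to 0$. Hence $\ddelta(\cM_\cA)=1$, i.e.\ $\cA\subseteq\cB$ is Behrend, so $\cM_\cB\supseteq\cM_\cA$ has density $1$ and $\cB$ is Behrend.

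The routine ingredients (periodicity of finite sets of multiples and the coprime-period factorisation) are standard; the delicate point is the passage between the infinite sets $\widetilde\cB^{(N)}$, $\cB$ and their finite truncations, which is precisely where the Davenport--Erd\"os continuity \eqref{eq:app-from-inside} must be used, together with the vanishing of an infinite product whose factors are bounded away from $1$. I expect the main obstacle to be the diagonal choice of prime ranges in (ii): one must guarantee simultaneously that each $F_i$ still captures density close to $L$ (so that the hypothesis $g(N_i)\ge L$ is genuinely exploited) and that consecutive blocks remain prime-disjoint, so that the independence underlying the product formula is legitimate.
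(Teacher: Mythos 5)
Your proposal is correct, but it follows a genuinely different route from the paper. For the hard direction of (i) ($\cB$ Behrend $\Rightarrow$ every $\widetilde\cB^{(N)}$ Behrend), the paper argues by contrapositive and cites Hall's result that a finite union of non-Behrend sets is non-Behrend \cite[Cor.~0.14]{hall-book}; you instead make this self-contained and quantitative via the Chinese Remainder Theorem, obtaining $1-\ddelta(\cM_\cB)\geqslant\bigl(1-\ddelta(\cM_{\widetilde\cB^{(N)}})\bigr)\prod_{p\leqslant N}\left(1-\frac1p\right)$ from the coprime-period factorisation applied to finite truncations plus the Davenport--Erd\H{o}s continuity \eqref{eq:app-from-inside}. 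For (ii), the paper reduces everything to the dichotomy of Lemma~\ref{lemma:dichotomy} and proves that lemma either probabilistically --- identifying $\ddelta(\cM_{\widetilde\cB^{(N)}})$ with $1-m_H(W_{\widetilde\cB^{(N)}})$ and running a Kolmogorov-type $0$-$1$-law for the tail event $W_\infty$ on the compact group $H$ --- or, in Kasjan's arithmetic alternative, by bootstrapping the multiplicativity identity $\ddelta(\cM_{\widetilde{\cB}^{(N)}})=\ddelta(\cM_{\widetilde{\cB}^{(N)}}\setminus\cM_{\cB^{(N)}})+\ddelta(\cM_{\widetilde{\cB}^{(N)}})\cdot\ddelta(\cM_{\cB^{(N)}})$ to force $\ddelta(\cM_{\cB^{(N)}})\to1$. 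Your argument shares Kasjan's two pillars (coprime multiplicativity and \eqref{eq:app-from-inside}) but uses a different mechanism: from $L=\lim_N g(N)>0$ you greedily extract finite, pairwise prime-disjoint blocks $F_i\subseteq\widetilde\cB^{(N_i)}$ with $d(\cM_{F_i})\geqslant L-\varepsilon$, and the resulting infinite product $\prod_i(1-d(\cM_{F_i}))$ kills $\du(\cF_\cA)$ for $\cA=\bigcup_iF_i$, exhibiting an explicit Behrend subset of $\cB$. This is more constructive --- it directly produces the Behrend witness, nicely echoing the characterisation of non-taut sets as those containing scaled Behrend sets --- and it avoids both the group-theoretic machinery and the citation of Hall; what the paper's probabilistic proof buys instead is the isolation of the $0$-$1$-law mechanism, which the author explicitly flags as potentially transferable to more general model sets. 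Your bookkeeping is sound in the two delicate places: $g(N)\geqslant L$ for every $N$ guarantees each block selection via \eqref{eq:app-from-inside}, and taking $N_{i+1}$ to dominate all primes occurring in $F_1,\dots,F_i$ legitimises the exact product formula for the finite unions.
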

\noindent
The proof, which we present in  section~\ref{sec:behrend}, relies on a version of Kolmogorov's $0$-$1$-law, that is behind Lemma~\ref{lemma:dichotomy} below. Stanis\l{}aw Kasjan found a purely number theoretic proof of this lemma  and was so kind to allow a reproduction of his proof in this paper~\cite{Kasjan-pc}.

A rather immediate corollary to this theorem characterizes taut sets. We use the following notation: For a primitive set $\cB\subseteq\N$ and any positive integer $q$ let
\begin{equation*}
\cB/q:=\{b/q: b\in\cB\text{ and }q\mid  b\}.
\end{equation*}  
\begin{corollary}\label{coro:taut}
A primitive set $\cB\subseteq\N$ is taut if and only if
$\lim_{N\to\infty}\ddelta(\cM_{\widetilde{\cB/q}^{(N)}})=0$ for all $q\in\N\setminus\cB$.
\end{corollary}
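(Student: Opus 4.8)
The plan is to reduce the corollary to Theorem~\ref{theo:behrend}(ii) applied to the dilated sets $\cB/q$, combined with the classical fact recalled in the introduction that a primitive set fails to be taut precisely when it contains a scaled copy of a Behrend set.

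First I would record that $\cB/q$ is again primitive: if $a,a'\in\cB/q$ satisfy $a\mid a'$, then $qa,qa'\in\cB$ with $qa\mid qa'$, so primitivity of $\cB$ forces $a=a'$. Hence Theorem~\ref{theo:behrend}(ii) applies with $\cB$ replaced by $\cB/q$, and for each fixed $q$ it yields
\begin{equation*}
\lim_{N\to\infty}\ddelta(\cM_{\widetilde{\cB/q}^{(N)}})=0
\quad\Longleftrightarrow\quad
\cB/q\text{ is not Behrend}.
\end{equation*}
Consequently the hypothesis of the corollary --- that this limit vanishes for \emph{every} $q\in\N$ --- is equivalent to the assertion that $\cB/q$ is not Behrend for any $q\in\N$. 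It therefore suffices to prove that $\cB$ is taut if and only if no $\cB/q$ is Behrend, and I would establish each implication by contraposition, using throughout the dilation identity $q\cdot(\cB/q)=\{b\in\cB:q\mid b\}\subseteq\cB$.

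For the two directions: if $\cB/q$ is Behrend for some $q$, then $\cA:=\cB/q$ is a Behrend set with $q\cA\subseteq\cB$, so $\cB$ contains a scaled copy of a Behrend set and hence is not taut. Conversely, if $\cB$ is not taut, then $r\cA\subseteq\cB$ for some $r\in\N$ and some Behrend set $\cA$; since every element of $r\cA$ is divisible by $r$, dividing by $r$ gives $\cA\subseteq\cB/r$. By monotonicity of the set of multiples this yields $\cM_\cA\subseteq\cM_{\cB/r}$, and since the logarithmic density exists and is monotone on sets of multiples (Davenport--Erd\"os) we obtain $1=\ddelta(\cM_\cA)\le\ddelta(\cM_{\cB/r})\le 1$, so $\cB/r$ is Behrend. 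This completes the equivalence.

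I do not expect a serious obstacle here: the entire arithmetic content of the statement is carried by Theorem~\ref{theo:behrend}(ii), and what remains is a bookkeeping argument. The only points requiring a little care are the verification that $\cB/q$ inherits primitivity, so that the theorem is applicable, and the correct use of the identity $q\cdot(\cB/q)=\{b\in\cB:q\mid b\}$ together with the monotonicity of the logarithmic density of sets of multiples; both are routine.
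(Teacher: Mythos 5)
Your proof is correct and takes essentially the same route as the paper: both directions reduce to the dichotomy of Theorem~\ref{theo:behrend} via the classical criterion that $\cB$ fails to be taut exactly when $r\cA\subseteq\cB$ for some Behrend $\cA$, using $q\cdot(\cB/q)\subseteq\cB$ one way and $\cA\subseteq\cB/r$ the other. The only cosmetic differences are that you explicitly verify primitivity of $\cB/q$ (a worthwhile check the paper leaves implicit) and invoke part~(ii) as a biconditional where the paper uses part~(i) for the converse; the content is identical.
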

\begin{proof}
Suppose first that $\cB$ is taut. As $q\cdot\cB/q\subseteq\cB$ and $1\not\in\cB/q$, the set $\cB/q$ is not Behrend, and Theorem~\ref{theo:behrend} implies $\lim_{N\to\infty}\ddelta(\cM_{\widetilde{\cB/q}^{(N)}})=0$.
Conversely, if $\cB$ is not taut, then there are $r\in\N$ and a Behrend set $\cA$ such that $r\cA\subseteq\cB$. In particular $\cA\subseteq\cB/r$, so that also $\cB/r$ is Behrend. But then also all sets $\widetilde{\cB/r}^{(N)}$ are Behrend in view of Theorem~\ref{theo:behrend}, so that
$\lim_{N\to\infty}\ddelta(\cM_{\widetilde{\cB/r}^{(N)}})=1\neq0$.
\end{proof}

\subsection{Consequences for the dynamics of $\cB$-free systems}

For a given set $\cB\subseteq\N$ denote by $\eta\in\{0,1\}^\Z$ the characteristic function of $\cF_\cB$, i.e. $\eta(n)=1$ if and only if $n\in\cF_\cB$, and consider the
orbit closure $X_\eta$ of $\eta$ in the shift dynamical system
$(\{0,1\}^\Z,\sigma)$, where $\sigma$ stands for the left shift.
Topological dynamics and ergodic theory provide a wealth of concepts to describe various aspects of the structure of $\eta$, see \cite{Sa} which originated
 this point of view by studying the set of square-free numbers, and also
 \cite{Ab-Le-Ru}, \cite{BKKL2015}, \cite{KKL2016}, \cite{KR2015}  which continued this line of research. We collect some facts from these references:
\begin{enumerate}[(A)]
\item $\eta$ is quasi-generic for a natural ergodic shift invariant probability measure $\nu_\eta$ on $\{0,1\}^\Z$, called the \emph{Mirsky measure} of $\cB$ \cite[Prop.~E]{BKKL2015},
in particular $\supp(\nu_\eta)\subseteq X_\eta$. The Mirsky measure can 
be characterized 
as the unique shift invariant probability measure $P$ on $X_\eta\subseteq\{0,1\}^\Z$
with the property that $\lim_{n\to\infty}n^{-1}\sum_{k=1}^nx_k=\overline{d}(\cF_\cB)$ for $P$-a.a.~$x\in X_\eta$ (while $\limsup_{n\to\infty}n^{-1}\sum_{k=1}^nx_k\leqslant \overline{d}(\cF_\cB)$ for all $x\in X_\eta$), see \cite[Cor.~3 and~4]{KR2015}. 
\item If $\cB$ has light tails, then $\cB$ is taut, but the converse does not hold \cite[Sect.~4.3 and Cor.~4.19]{BKKL2015}.
\item If $\cB$ has light tails, then $\eta$ is generic for $\nu_\eta$ \cite[Prop.~E and Rem.~2.24]{BKKL2015}.\label{item:LT-generic}
\item If $\cB$ has light tails, then $\supp(\nu_\eta)=X_\eta$ \cite[Thm.~G]{BKKL2015}.\label{item:LT-support}
\item If $\cB$ has light tails and if $\cB$ contains an infinite pairwise coprime subset, then $X_\eta$ is \emph{hereditary}, i.e. $y\in\{0,1\}^\Z$ belongs to $X_\eta$ whenever there is $x\in X_\eta$ with $y\leqslant x$ coordinate-wise \cite[Thm.~D]{BKKL2015}.\label{item:LT-hereditary}
\end{enumerate}
One may ask, whether implications (\ref{item:LT-generic}) - (\ref{item:LT-hereditary}) continue to hold if only tautness of the set $\cB$ is assumed. For Implication (\ref{item:LT-generic}) this is not true \cite[Prop.~4.17]{BKKL2015}, but for the other two implications this remained open in \cite{BKKL2015}. 
Here we prove that it suffices indeed to assume tautness for the conclusions of (\ref{item:LT-support}) and~(\ref{item:LT-hereditary}) to hold true:
\begin{theorem}\label{theo:support}
Suppose that the  set $\cB\subseteq\N$ is taut. Then
$\supp(\nu_\eta)=X_\eta$.
\end{theorem}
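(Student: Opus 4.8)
The plan is to show that the Mirsky measure $\nu_\eta$ charges every nonempty open subset of $X_\eta$. Since $X_\eta$ is the orbit closure of $\eta$ in $\{0,1\}^\Z$, a basis for its topology is given by cylinder sets, so it suffices to prove that every finite block (word) which actually occurs somewhere in some $x\in X_\eta$ is given positive measure by $\nu_\eta$. Because $\nu_\eta$ is shift-invariant and is characterized as the measure recording block-frequencies in $\eta$ (item~(A) in the excerpt), what I really need to establish is: if a block $w\in\{0,1\}^{\{0,\dots,L-1\}}$ appears in \emph{some} element of $X_\eta$, then $w$ appears in $\eta$ itself with positive lower density. By the description of $X_\eta$ as an orbit closure, ``$w$ occurs in some $x\in X_\eta$'' is equivalent to ``$w$ occurs somewhere in $\eta$'' (a block in the orbit closure is a limit of blocks in $\eta$, hence equals one of them). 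So the heart of the matter is a \emph{positive-density} statement: every block that occurs in $\eta$ at least once occurs with positive frequency.

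**First I would** reduce the positive-frequency claim to a statement about the arithmetic structure of $\cF_\cB$. A block $w$ occurring in $\eta$ at position $m$ prescribes, on a window $\{m,\dots,m+L-1\}$, exactly which coordinates are forced to lie in $\cF_\cB$ (the $1$'s of $w$) and which lie in $\cM_\cB$ (the $0$'s). Finding this block with positive frequency amounts to producing a positive-density set of shifts $t$ such that the translated window $t+\{m,\dots,m+L-1\}$ reproduces the same pattern of $\cM_\cB$-membership. The $1$-positions are easy: I can insist that the whole window avoids $\cM_\cB$ by a congruence condition, and controlling simultaneously the finitely many $0$-positions means ensuring that each of them is divisible by a suitable $b\in\cB$. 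The delicate point is that these divisibility requirements must be realizable along a positive-density arithmetic progression while \emph{not} accidentally forcing the $1$-positions into $\cM_\cB$ through \emph{other}, large elements of $\cB$.

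**The hard part will be** handling exactly those large divisors, and this is where tautness enters through Corollary~\ref{coro:taut}. The issue is that after pinning down the behaviour relative to the finitely many small elements $\cB\cap\{1,\dots,K\}$ by the Chinese Remainder Theorem, the remaining elements $b\in\cB$ with large $\Spec(b)$ could conceivably cover a positive-density portion of the candidate progression and destroy the required $1$'s. The sets $\widetilde{\cB/q}^{(N)}$ in Corollary~\ref{coro:taut} are designed precisely to measure the density of multiples coming from elements of $\cB$ whose prime support avoids a fixed initial segment $\{1,\dots,N\}$, after factoring out a modulus $q$. Tautness guarantees $\lim_{N\to\infty}\ddelta(\cM_{\widetilde{\cB/q}^{(N)}})=0$ for every $q$, which says that, on each fixed residue class modulo a suitable $q$ built from the small primes, the multiples arising from the large-support part of $\cB$ have vanishing density. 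I would therefore fix a scale $K$ (equivalently a prime bound $N$) large enough that this tail density is smaller than the density of the arithmetic progression I constructed via CRT, deduce that a positive-density subset of that progression is genuinely $\cB$-free on the $1$-positions, and conclude that $w$ occurs in $\eta$ with positive lower density.

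**To finish**, once positive lower density of each occurring block is established, I would invoke the characterization of $\nu_\eta$ recalled in item~(A): because $\eta$ is quasi-generic for $\nu_\eta$ along some subsequence computing these block frequencies, a block of positive lower frequency receives positive $\nu_\eta$-mass on the corresponding cylinder. Since every nonempty open set in $X_\eta$ contains such a cylinder, $\nu_\eta$ has full support, i.e.~$\supp(\nu_\eta)=X_\eta$. The only subtlety I anticipate in writing this cleanly is reconciling $\limsup$ versus $\liminf$ along the quasi-generic subsequence; I expect that passing to logarithmic densities (which \emph{do} converge by Davenport--Erd\H{o}s, as recalled in the excerpt) circumvents this, since the logarithmic and lower densities of $\cM_\cB$ coincide and the whole argument can be run with logarithmic averages throughout.
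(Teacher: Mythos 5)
Your overall architecture matches the paper's: reduce full support to the claim that every block occurring in $\eta$ occurs with positive (logarithmic) frequency, force the $0$-positions by a congruence via CRT, and control the $1$-positions using the tautness characterization of Corollary~\ref{coro:taut}; your closing remark about working with logarithmic densities throughout is also exactly how the paper handles the $\liminf$/$\limsup$ issue. But there is a genuine gap at precisely the point you call the hard part. Your decomposition of $\cB$ is two-fold: the finitely many elements of $\cB\cap\{1,\dots,K\}$, pinned down by CRT, and the elements whose spectrum (after factoring out a modulus $q$ built from small primes) avoids $\{1,\dots,N\}$, whose multiples are made sparse by tautness. With ``$K$ (equivalently a prime bound $N$)'' a single parameter, this is either incomplete or circular. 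Incomplete, because an element $b=pm$ with one prime factor $p\leqslant N$ and a huge cofactor $m$ with $\Spec(m)\cap\{1,\dots,N\}=\emptyset$ lies in neither class unless $q$ ranges over all products of primes up to $N$; and the multiples of such elements have density bounded below only by quantities like $\sum_{n<p\leqslant N}1/p$, which is \emph{not} small, so they cannot be absorbed into the tail by any density estimate. Circular, because if you do let $q$ range over all products of primes $\leqslant N$, Corollary~\ref{coro:taut} makes the finitely many relevant densities $\ddelta(\cM_{\widetilde{\cB/q}^{(N')}})$ small only for some $N'$ possibly much larger than $N$, creating new uncovered primes in $(N,N']$; and if instead you enlarge the CRT modulus to $\lcm(\cB\cap\{1,\dots,K\})$, the progression density $1/\lcm(\cB\cap\{1,\dots,K\})$ shrinks far faster than any tail bound you can extract, so the comparison ``tail density $<$ progression density'' can never be closed by taking $K=N$ large. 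The quantifier order you need is: modulus first, then tail cutoff; a single scale cannot deliver both.

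The paper escapes this with a three-part decomposition and a \emph{multiplicative}, not additive, treatment of the intermediate primes, and this is the missing idea in your sketch. The modulus $\beta=\lcm(\cA)$ is fixed once and for all from the given occurrence of the block (with $\cA$ consisting of $\cB^{(n)}$ together with one divisor $b_{j_u}$ for each $0$-position, a finite set by primitivity), $A:=\Spec(\beta)$, and $\varepsilon:=\frac{1}{2n\beta}$ is fixed \emph{before} $N$ and $P=(\cP\cap\{1,\dots,N\})\setminus A$ are chosen. Elements of $\cB_A$ are shown in Proposition~\ref{prop:as5.11} to be automatically harmless along $r+\beta\Z$ (the argument that $p\mid b_{j_u}$ and $p>n$ force $i=u\in I$). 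Elements of $\cB\cap\cM_P$ are not counted by density at all: Lemma~\ref{lemma:as5.20} shows that imposing the extra congruences avoiding $n$ residues modulo each $p\in P$ costs only the factor $\prod_{p\in P}\left(1-\frac{n}{p}\right)>0$, and under these congruences every multiple of such an element misses the window for free. Only the true residual $\cB\setminus(\cB_A\cup\cM_P)$ is estimated by density, in Lemma~\ref{lemma:prime-exhaust}, where Corollary~\ref{coro:taut} is applied to the finitely many $q\in Q_0$ supported on the \emph{fixed} set $A$ with exponents $<L$, large exponents being swallowed by $\bigcup_{p\in A}p^L\Z$. Because $\beta$ and $\varepsilon$ precede $N$, Proposition~\ref{prop:to5.20} yields logarithmic density at least $\frac{1}{2\beta}\prod_{p\in P}\left(1-\frac{n}{p}\right)>0$ with no circularity. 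Without an analogue of the factor $\prod_{p\in P}(1-n/p)$ from Lemma~\ref{lemma:as5.20}, your argument as proposed cannot be completed.
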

\begin{theorem}\label{theo:heredity}
Suppose that the  set $\cB\subseteq\N$ is taut and contains an infinite co-prime subset. Then
 $X_\eta$ is hereditary.
\end{theorem}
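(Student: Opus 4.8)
The plan is to prove heredity in its equivalent coordinatewise form: for every $x\in X_\eta$ and every $y\in\{0,1\}^\Z$ with $y\leqslant x$ one must show $y\in X_\eta$. Since $X_\eta$ is closed and $\sigma$-invariant, and since $y$ is the coordinatewise limit (as $M\to\infty$) of the sequences that agree with $y$ on $[-M,M]$ and with $x$ off $[-M,M]$, it suffices to treat the case where $y$ differs from $x$ only on a finite set $F\subseteq[-M,M]$ of coordinates, at each of which $x=1$ and $y=0$; call this sequence $x^F$. Membership $x^F\in X_\eta$ is in turn equivalent to the statement that every central block $x^F|_{[-M,M]}$ occurs somewhere in $\eta$. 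As $x\in X_\eta$, the block $x|_{[-M,M]}$ already occurs, say $\eta(n+i)=x(i)$ for $|i|\leqslant M$; the task is therefore to manufacture a single integer $n'$ with $\eta(n'+i)=x^F(i)$ for all $|i|\leqslant M$, i.e.\ to switch the values at the positions of $F$ from $1$ to $0$ while preserving all other entries of the window.

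First I would fix $K$ so large that every $0$ of $x|_{[-M,M]}$ is already forced by a small divisor, i.e.\ for each $i$ with $x(i)=0$ there is $b\in\cB$ with $b\leqslant K$ and $b\mid n+i$ (possible, as the window is finite and every such $n+i$ lies in $\cM_\cB$). Put $Q=\lcm(1,\dots,K)$, so that $n'\equiv n\pmod Q$ forces $b\mid n'+i\Leftrightarrow b\mid n+i$ for every $b\leqslant K$. From the infinite pairwise coprime set $\{c_i\}_{i\geqslant 1}\subseteq\cB$ I would pick, for each $p\in F$, a distinct element $b_p^\ast$ with $b_p^\ast>\max(K,2M)$ and $\gcd(b_p^\ast,Q)=1$; all but finitely many $c_i$ meet these requirements because they are pairwise coprime and $Q$ has only finitely many prime factors. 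By the Chinese Remainder Theorem the congruences $n'\equiv n\pmod Q$ and $b_p^\ast\mid n'+p$ ($p\in F$) are simultaneously solvable and define a residue class modulo $\Lambda:=Q\prod_{p\in F}b_p^\ast$. For any $n'$ in this class every original $0$ of the window survives (same small witness), every flipped position $p\in F$ becomes a multiple of $b_p^\ast\in\cB$ and hence a $0$, and because $b_p^\ast>2M$ this planted divisor meets the window only at $p$.

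It remains to secure the surviving $1$'s. If $j\in[-M,M]$ has $x^F(j)=1$, then $n+j\in\cF_\cB$ and, by the choice of $Q$, no $b\in\cB$ with $b\leqslant K$ divides $n'+j$; thus only elements $b\in\cB$ with $b>K$ can spoil the value. Restricted to the residue class $\Lambda$, the set of $n'$ with $b\mid n'+j$ has relative density at most $\gcd(b,\Lambda)/b$, so the relative density of the ``bad'' $n'$ is at most $\sum_{b\in\cB,\,b>K}\gcd(b,\Lambda)/b$, and summing over the at most $2M+1$ surviving positions I only need this quantity to be $<1/(2M+1)$ for some admissible $K$. Under the light-tail hypothesis of \cite{BKKL2015} this is immediate, since the sum is dominated by $\du(\cM_{\{b\in\cB:\,b>K\}})$, which tends to $0$. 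Under mere tautness the tail density need not vanish, and here the dichotomy of Theorem~\ref{theo:behrend} enters: grouping the summands according to $q=\gcd(b,\Lambda)$ rewrites the bad density in terms of the scaled sets $\widetilde{\cB/q}^{\,(K)}$ of Corollary~\ref{coro:taut}, whose logarithmic densities $\ddelta(\cM_{\widetilde{\cB/q}^{\,(K)}})$ tend to $0$ as $K\to\infty$ for every $q$. Once the total bad density is below $1$, the residue class contains an $n'$ (in fact a positive-density set of them) realising the block $x^F|_{[-M,M]}$ in $\eta$, which is what we wanted.

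The main obstacle is exactly this last step: turning tautness into a usable tail estimate. Because tautness controls $\cM_\cB$ only after scaling by $q$ and deleting small prime factors, I must re-express $\sum_{b>K}\gcd(b,\Lambda)/b$ through the scaled sets $\cB/q$ with small primes removed, and then sum the resulting contributions over the divisors $q$ of $\Lambda$ built from primes $\leqslant K$. Two points need care. First, freezing the residue modulo $Q$ pins down the small-prime behaviour only up to the prime powers actually present in $Q$, so a $b>K$ carrying a high power of a small prime must be accounted for separately and not naively identified with an element of $\widetilde{\cB/q}^{\,(K)}$. Second, although there are infinitely many $K$-smooth $q$, the weights $1/q$ are summable (their sum is $\prod_{p\leqslant K}(1-1/p)^{-1}<\infty$), which is what lets the per-$q$ bounds of Corollary~\ref{coro:taut} be assembled into a single estimate that vanishes as $K\to\infty$. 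The coprimality and unboundedness of the subset $\{c_i\}$ are used in an essential way: coprimality for the CRT-compatibility of the planted zeros and for their mutual non-interference, unboundedness to keep each $b_p^\ast$ larger than both $K$ and $2M$ while remaining coprime to the fixed but $K$-dependent modulus $Q$.
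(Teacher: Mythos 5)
Your global architecture is sound and is essentially a reconstruction of the argument in \cite[Sec.~5]{BKKL2015}, which the paper itself does not repeat but merely cites: the reduction of heredity to finitely many flipped coordinates, the choice of $K$ bounding the witnesses of the zeros of the block, and the CRT-planting of new zeros at the positions of $F$ using elements of the infinite coprime subset that are larger than $\max(K,2M)$ all match that scheme. The paper's own proof of this theorem is a two-line reduction: light tails enters the BKKL proof only through their Proposition~5.11, which is replaced by Proposition~\ref{prop:as5.11}; hence the entire mathematical burden of the theorem sits in the positive-density statement (Propositions~\ref{prop:as5.11} and~\ref{prop:to5.20}, resting on Lemmas~\ref{lemma:prime-exhaust} and~\ref{lemma:as5.20}). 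That is exactly where your proposal has a genuine gap.

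Concretely, two steps fail as written. First, the intermediate bound ``the relative density of the bad $n'$ is at most $\sum_{b\in\cB,\,b>K}\gcd(b,\Lambda)/b$'' is both unjustified and vacuous: upper (or logarithmic) density is not countably subadditive, and tautness does not imply $\sum_{b\in\cB}1/b<\infty$ --- if the tail sums $\sum_{b>K}1/b$ tended to $0$, then $\cB$ would have light tails and the theorem would already be in \cite{BKKL2015}; so precisely in the new cases this theorem covers, your sum diverges (since $\gcd(b,\Lambda)\geqslant 1$). Second, your pivot to grouping by $q=\gcd(b,\Lambda)$ asserts, but does not prove, the crucial comparison: after freezing $n'$ mod $\Lambda$ and reparametrizing $n'=\rho+\Lambda m$, the bad events for a fixed $q$ become residue classes $m\equiv m_0(b')\ (\mathrm{mod}\ b')$ with \emph{nonzero} residues $m_0(b')$, $b'\in\cB/q$, and there is no general inequality bounding the density of a union of shifted residue classes by $\ddelta$ of the corresponding set of multiples --- for instance, the staggered classes $1\ \mathrm{mod}\ 2$, $2\ \mathrm{mod}\ 4$, $4\ \mathrm{mod}\ 8,\dots$ have union density $1$ while $\cM_{\{2,4,8,\dots\}}$ has density $\frac12$. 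What rescues your specific situation is that all these classes are the pullback of one set of multiples under a single affine map whose modulus $\Lambda/q$ is coprime to each $b'$, so finite subfamilies obey exact CRT-independence and the infinite family is then reached by a Davenport--Erd\H{o}s limit argument via \eqref{eq:app-from-inside}; but that is precisely the content the paper supplies in Lemma~\ref{lemma:as5.20} (and, in Kasjan's proof, equation \eqref{independent}), and none of it appears in your proposal. You also leave unhandled the tail elements $b$ sharing large prime factors with the planted moduli $b_p^\ast$, which perturb $\gcd(b,\Lambda)$ out of your $K$-smooth grouping and require a separate (finite, Corollary~\ref{coro:taut}-based) treatment. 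Note finally that the paper's logistics avoid conditioning altogether: it never freezes mod $\lcm(1,\dots,K)$, but fixes $\beta=\lcm(\cA)$ first, applies Lemma~\ref{lemma:prime-exhaust} with $\varepsilon=1/(2n\beta)$ (your ``high power of a small prime'' split is the same device as its $Q_0$-versus-$p^L$ truncation), and pays for the medium primes with the unconditional factor $\prod_{p\in P}(1-n/p)$ of Lemma~\ref{lemma:as5.20}. Until you prove your conditional-density comparison --- your analogue of Lemma~\ref{lemma:as5.20} --- the proof is incomplete at its central step.
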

\begin{remark}
It was proved in \cite[Thm.~B]{BKKL2015} that $\cB$ contains an infinite co-prime subset if and only if the subshift $X_\eta$ is \emph{proximal}, i.e. if and only if it has a fixed point as its unique minimal subset (the point $(\dots000\dots)$ in this case).
\end{remark}
The proofs of both theorems rely on substantial parts of the proofs of the corresponding results from~\cite{BKKL2015}. We strengthen some of the lemmas from that paper in such a way that light tails are no longer needed to conclude, but the new characterization of tautness from Corollary~\ref{coro:taut} suffices.

Theorem~\ref{theo:behrend} is a $0$-$1$-law that we prove in a measure theoretic and probabilistic framework, which is borrowed from previous publications \cite{BKKL2015,KKL2016,KR2015,KR2016}:
\begin{itemize}
\item $\Delta:\Z\to\prod_{b\in\cB}\Z/b\Z$, $\Delta(n)=(n,n,\dots)$, denotes the canonical diagonal embedding.
\item $H:=\overline{\Delta(\Z)}$ is a compact abelian group, and we denote by $m_H$ its normalised Haar measure.
\item The \emph{window} associated to $\cB$  is defined as
\begin{equation}\label{eq:W}
W:=\{h\in H: h_b\neq0\ (\forall b\in\cB)\}.
\end{equation}
\item For an arbitrary subset $A\subseteq H$ we define the coding function
$\varphi_A:H\to\{0,1\}^\Z$ by $\varphi_A(h)(n)=1$ if  and only if $h+\Delta(n)\in A$. Of particular interest is the coding functions $\varphi:=\varphi_W$
\item
Observe that
$\varphi(h)(n)=1$ if  and only if  $h_b+n\neq0$ mod $b$ for all $b\in\cB$.
\item With this notation $\eta=\varphi(\Delta(0))$ and $X_\eta=\overline{\varphi(\Delta(\Z))}$, so that
$X_\eta\subseteq X_\varphi:=\overline{\varphi(H)}$.
\end{itemize}
Our proof yields indeed the following sharpening of Theorem~\ref{theo:support}:
\begin{theorem}\label{theo:main}
Suppose that the set $\cB\subseteq\N$ is taut. Then
$\supp(\nu_\eta)=X_\eta=X_\varphi$.
\end{theorem}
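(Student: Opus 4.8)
The plan is to reduce everything to the single inclusion $X_\varphi\subseteq\supp(\nu_\eta)$. Indeed, by item~(A) we already have $\supp(\nu_\eta)\subseteq X_\eta$, and by the definitions preceding the theorem $X_\eta\subseteq X_\varphi$; combined with $\supp(\nu_\eta)\subseteq X_\varphi$ (the Mirsky measure lives on $X_\varphi$), once $X_\varphi\subseteq\supp(\nu_\eta)$ is shown all three sets coincide and $X_\eta$ is squeezed between. I would use the standard fact from the present framework that the Mirsky measure is the push-forward $\nu_\eta=\varphi_*m_H$ of Haar measure under the coding map, so that $\nu_\eta(C)=m_H(\varphi^{-1}C)$ for every cylinder $C$. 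Since $\supp(\nu_\eta)$ consists of the points all of whose cylinders carry positive measure, and since every $x\in X_\varphi=\overline{\varphi(H)}$ agrees on a prescribed window $[-M,M]$ with some $\varphi(h^*)$, $h^*\in H$, the whole statement reduces to the positivity claim
\[
m_H\bigl(\{h\in H:\varphi(h)(n)=\varphi(h^*)(n)\ \text{for }|n|\le M\}\bigr)>0
\qquad(\forall\,h^*\in H,\ \forall\,M).
\]

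Next I would dispose of the coordinates carrying a $0$. Write $P_1=\{n:|n|\le M,\ \varphi(h^*)(n)=1\}$ and $P_0=\{n:|n|\le M,\ \varphi(h^*)(n)=0\}$. For each $n\in P_0$ the definition of $\varphi$ provides some $b_n\in\cB$ with $h^*_{b_n}\equiv -n\bmod b_n$; fixing these finitely many coordinates cuts out a clopen coset $V=\{h:h_{b_n}=h^*_{b_n}\ \forall n\in P_0\}$ with $m_H(V)>0$ and $h^*\in V$. On $V$ every $0$-constraint holds through its witness, and for the witness moduli the $1$-constraints hold because they hold for $h^*$. Thus the cylinder condition restricted to $V$ reduces to the pure $1$-requirement
\[
h_b\neq -n\bmod b\qquad(\forall\,n\in P_1,\ \forall\,b\in\cB),
\]
i.e.\ to showing that $V\cap\bigcap_{n\in P_1}(W-\Delta(n))$ has positive measure. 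Passing to densities, this says that relative to the fixed congruence class defined by $V$ the set $\{k:k+n\in\cF_\cB\ \forall n\in P_1\}$ has positive lower density, equivalently $\du\bigl(\bigcup_{n\in P_1}(\cM_\cB-n)\bigr)<1$ in that class.

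This density positivity is the crux, and it is where tautness replaces light tails. I would split the forbidden (``bad'') generalized multiples according to the size of the prime powers dividing $b$. For the large-spectrum part $\widetilde\cB^{(N)}$ the moduli become coprime to the fixed modulus of $V$ once $N$ exceeds it, so their contribution is the \emph{unconditional} density $\ddelta(\cM_{\widetilde\cB^{(N)}})$, which tends to $0$ by the dichotomy of Theorem~\ref{theo:behrend}(ii), using that a taut set is not Behrend. For the remaining bounded-spectrum elements one reduces, modulus by modulus, to the sets $\cB/q$: conditioning on $V$ and stripping the small-prime part of $b$ identifies the surviving tails with families of the form $\widetilde{\cB/q}^{(N)}$, whose densities vanish by Corollary~\ref{coro:taut}. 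Here the two structural consequences of tautness that I would isolate are that $\cB/q$ is not Behrend for every $q$ (since $q\cdot(\cB/q)\subseteq\cB$), and hence that every subset of $\cB$ is non-Behrend and the relative density of $\cM_\cB$ in every arithmetic progression is strictly below $1$.

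The step I expect to be the main obstacle is making this quantitative and \emph{uniform}: the threshold $N$ must be pushed to infinity to annihilate the large-spectrum tail, yet doing so enlarges the family of reduced sets $\cB/q$ that govern the bounded-spectrum part, and one must bound the total conditional density of the bad set strictly below $1$ without losing control as these two limits compete. This is precisely the point at which \cite{BKKL2015} invoked light tails, and where I would instead install a strengthened version of the relevant lemma resting only on the vanishing $\lim_{N\to\infty}\ddelta(\cM_{\widetilde{\cB/q}^{(N)}})=0$ of Corollary~\ref{coro:taut}. Once the bad conditional density is shown to be $<1$, the good set $V\cap\bigcap_{n\in P_1}(W-\Delta(n))$ has positive measure, the cylinder positivity follows, and with it $X_\varphi\subseteq\supp(\nu_\eta)$ and the desired triple equality.
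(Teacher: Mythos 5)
Your global strategy runs parallel to the paper's: reduce the theorem to positivity of cylinder events (you work on the Haar side via $\nu_\eta=\varphi_*m_H$, which neatly gives $X_\varphi\subseteq\supp(\nu_\eta)$ in one stroke, whereas the paper proves positive frequencies of blocks of $\eta$ in Proposition~\ref{prop:as5.11} and obtains $X_\varphi=X_\eta$ by rerunning the proof of \cite[Prop.~2.2]{KKL2016}); fix witnesses for the $0$-positions to cut out a positive-measure coset $V$; then split $\cB$ by spectrum using the new tautness characterization. But there is a genuine gap at exactly the point you flag, and your sketched resolution does not work as described: your decomposition of the surviving constraints into a large-spectrum tail $\widetilde\cB^{(N)}$ (vanishing by Theorem~\ref{theo:behrend}(ii)) plus bounded-spectrum families identified with $\widetilde{\cB/q}^{(N)}$ (vanishing by Corollary~\ref{coro:taut}) does not exhaust $\cB$. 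It omits every $b\in\cB$ divisible by a ``medium'' prime $p\leqslant N$ with $p\notin A=\Spec(\beta)$, i.e.\ the set $\cB\cap\cM_P$ with $P=(\cP\cap\{1,\dots,N\})\setminus A$. There are in general infinitely many such $b$, their coordinates are not pinned down by $V$, and $\ddelta(\cM_{\cB\cap\cM_P})$ does not become small as $N\to\infty$ (it grows), so these constraints can be absorbed neither into the vanishing tails nor into finitely many witness conditions, and a ``total bad density $<1$'' count cannot close. Writing that you would ``install a strengthened version of the relevant lemma'' names this hole without filling it: that lemma is the actual mathematical content of the step.

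The paper closes the hole with two ingredients absent from your sketch. First, Lemma~\ref{lemma:as5.20} (strengthening \cite[Lemma~5.20]{KKL2016}): imposing the much stronger requirement that the whole block avoid all multiples of every $p\in P$ costs only the positive multiplicative factor $\prod_{p\in P}\left(1-\frac{n}{p}\right)$ relative to the constraints from $\cC\setminus\cM_P$, so the medium-prime elements are disposed of wholesale, without any density estimate on them. Second, the order of quantifiers dissolves your worry about the two limits competing: one fixes $\varepsilon=\frac{1}{2n\beta}$ first, chooses $N$ once and for all via Corollary~\ref{coro:taut} uniformly over the \emph{finite} set $Q_0$ of small-prime parts (Lemma~\ref{lemma:prime-exhaust}, yielding $\ddelta(\cM_{\cB\setminus(\cB_A\cup\cM_P)})<2\varepsilon$), and Proposition~\ref{prop:to5.20} then gives the explicit lower bound $\frac{1}{2\beta}\prod_{p\in P}\left(1-\frac{n}{p}\right)>0$; no uniformity in $N$ is needed because $N$ is never sent to infinity. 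Two smaller defects: your auxiliary claim that tautness forces ``the relative density of $\cM_\cB$ in every arithmetic progression strictly below $1$'' is false as stated (take the progression $b\Z$ for any $b\in\cB$) --- the correct statement is precisely the progression-specific positivity of Proposition~\ref{prop:to5.20}, which is what must be proved; and the finitely many constraints from $\cB_A$ (finite by primitivity) are not automatic on $V$, since there $h_b$ is only determined modulo $\gcd(b,\beta)$ --- you would need either to enlarge the witness set to fix all coordinates $h_b$ with $b\in\cB_A$, or to argue arithmetically along $r+\beta\Z$ as in the proof of Proposition~\ref{prop:as5.11}.
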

\noindent
In \cite[Prop.~2.2]{KKL2016} (the second part of) this conclusion was proved under the assumption that $\cB$ has light tails.
\begin{remark}
\begin{compactenum}[a)]
\item
We recall from \cite[Theorem~A]{KKL2016} a purely measure theoretic characterization of tautness: The primitive set $\cB$ is taut if and only if
the window $W$ associated to $\cB$ is Haar-regular, i.e. if $\supp(m_H|_W)=W$.
\item Also proximality of $X_\eta$ (which is equivalent to $\cB$ having no infinite co-prime subset) can be characterized in terms of the window: $X_\eta$ is proximal if and only if $W$ has no interior point \cite[Thm.~C]{KKL2016}.
\item In \cite[Cor.~1]{Keller-heredity}, heredity of $X_\varphi$ is proved under the sole assumptions that $\cB$ is primitive and contains an infinite co-prime subset - no tautness is assumed. (Assuming tautness, heredity of $X_\varphi$ would follow immediately from Theorems~\ref{theo:heredity} and~\ref{theo:main}.) {\it Added in proof:} As the referee remarked, heredity of $X_\eta$ would not follow from these weaker assumptions: Take $\cB$ to be the set of all prime numbers. Then $\eta=\dots000101000\dots$ and the block $00100$ does not appear on $\eta$.
\end{compactenum}
\end{remark}

\paragraph{Acknowledgement} The approach taken in this note occured while I was supervising the MSc thesis of Jakob Seifert \cite{seifert}, who proved the identity $\supp(\nu_\eta)=X_\eta$ under an assumption on the set $\cB$ which implies tautness and is strictly weaker than light tails, but does not seem to be equivalent to tautness, namely: for any finite set $A\subseteq\cP$ there is a thin set $P\subseteq\cP\setminus A$ such that the set $\cB\setminus\cM_P$ has light tails. ($P$ is thin if $\sum_{p\in P}\frac{1}{p}$ converges.)

\section{Proof of Theorem~\ref{theo:behrend}}\label{sec:behrend}

For any subset $\cB'\subseteq\cB$ we denote the corresponding objects defined as above by $\Delta',H',m_{H'},W'$ and $\varphi'$. On the other hand one can consider the window corresponding to $\cB'$ as a subset of $H$, namely $W_{\cB'}:=\{h\in H: h_b\neq0\ (b\in\cB')\}$.

\begin{lemma}\label{lemma:windows}
With the previous notation, $m_H(W_{\cB'})=m_{H'}(W')$.
\end{lemma}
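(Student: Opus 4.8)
The plan is to exhibit a natural coordinate projection between the two groups and to transport Haar measure along it. Let $\pi:\prod_{b\in\cB}\Z/b\Z\to\prod_{b\in\cB'}\Z/b\Z$ be the canonical projection that forgets the coordinates indexed by $b\in\cB\setminus\cB'$. This $\pi$ is a continuous group homomorphism and satisfies $\pi\circ\Delta=\Delta'$ by construction. First I would check that $\pi$ restricts to a continuous surjective homomorphism $H\to H'$: since $\pi(\Delta(\Z))=\Delta'(\Z)\subseteq H'$ and $H'$ is closed, continuity gives $\pi(H)=\pi(\overline{\Delta(\Z)})\subseteq\overline{\Delta'(\Z)}=H'$; conversely $\pi(H)$ is compact, hence closed, and contains the dense set $\Delta'(\Z)$, so $\pi(H)=H'$.

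Next I would use the fact that the image of normalised Haar measure under a continuous surjective homomorphism of compact abelian groups is again normalised Haar measure. Concretely, $\pi_*m_H$ is a Borel probability measure on $H'$, and for any $g\in H'$ choose $h\in H$ with $\pi(h)=g$ (possible by surjectivity); then translation by $g$ on $H'$ intertwines with translation by $h$ on $H$ via $\pi$, so the $\sigma$-translation invariance of $m_H$ forces $\pi_*m_H$ to be translation invariant. By uniqueness of Haar measure, $\pi_*m_H=m_{H'}$.

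The final observation is that the window $W_{\cB'}\subseteq H$ depends only on the coordinates indexed by $\cB'$, so it is exactly the $\pi$-preimage of $W'$. Indeed, $h\in W_{\cB'}$ means $h_b\neq0$ for all $b\in\cB'$, which says precisely that $\pi(h)\in W'$; thus $W_{\cB'}=\pi^{-1}(W')$. Since $W'=\bigcap_{b\in\cB'}\{h'\in H':h'_b\neq0\}$ is a countable intersection of open sets and hence Borel measurable, I may apply the pushforward identity to conclude
\begin{equation*}
m_H(W_{\cB'})=m_H\bigl(\pi^{-1}(W')\bigr)=(\pi_*m_H)(W')=m_{H'}(W').
\end{equation*}

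The only step requiring genuine care is the claim that $\pi_*m_H=m_{H'}$; everything else is a routine verification of continuity, surjectivity, and measurability. I expect the surjectivity of $\pi$ onto $H'$ to be the essential input there, since without it the translation-invariance argument for $\pi_*m_H$ would break down, and it is precisely the density of $\Delta'(\Z)=\pi(\Delta(\Z))$ in $H'$ that supplies this surjectivity.
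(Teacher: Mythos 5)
Your proof is correct and takes essentially the same route as the paper's: the same coordinate projection $\pi$, the identification $\pi(H)=H'$ via $\pi(\Delta(\Z))=\Delta'(\Z)$ together with continuity and compactness, the pushforward identity $m_{H'}=m_H\circ\pi^{-1}$, and the set identity $W_{\cB'}=\pi^{-1}(W')$. The paper merely states these steps more tersely, leaving implicit the translation-invariance-plus-uniqueness argument for the Haar pushforward that you spell out.
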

\begin{proof}
Denote by $\pi$ the natural projection from $\prod_{b\in\cB}\Z/b\Z$ to $\prod_{b\in\cB'}\Z/b\Z$. Then $\Delta'(\Z)=\pi(\Delta(\Z))$, and as $\pi$ is continuous between compact metric spaces, it follows that $\pi(H)=H'$ so that $m_{H'}=m_H\circ\pi^{-1}$.
Quite obviously, $\pi^{-1}(W')\subseteq W_{\cB'}$. 
For the converse inclusion let $h\in W_{\cB'}\subseteq H$. Then $(\pi(h))_b=h_b\neq0$ for all $b\in\cB'$ so that $\pi(h)\in W'$. 
Hence
$
m_H(W_{\cB'})
=
m_H(\pi^{-1}(W'))
=
m_{H'}(W')
$.
\end{proof}

\begin{lemma}\label{lemma:dichotomy}
Let $\cB\subseteq\N$ be primitive. Then either $\ddelta(\cM_{\widetilde{\cB}^{(N)}})=1$ for all $N\in\N$ or 
$\lim_{N\to\infty}\ddelta(\cM_{\widetilde{\cB}^{(N)}})=0$.
\end{lemma}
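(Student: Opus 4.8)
The plan is to translate the statement about logarithmic densities into a statement about the Haar measure $m_H$, and then to recognise the relevant set as a tail event for a family of independent coordinates, so that Kolmogorov's $0$-$1$-law applies.

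First I would establish the bridge identity
\[\ddelta(\cM_{\cB'}) = 1 - m_H(W_{\cB'}) \qquad \text{for every } \cB' \subseteq \cB.\]
For a finite set $\cF \subseteq \cB$ the set $\cM_\cF$ is periodic, and its density is $1$ minus the density of $\cF$-free residues; the latter equals $m_{H_\cF}(W_\cF)$, which by Lemma~\ref{lemma:windows} equals $m_H(W_\cF)$. Passing to the limit along $\cF = \cB' \cap \{1,\dots,K\}$ and using the Davenport--Erd\"os continuity \eqref{eq:app-from-inside} on the left together with continuity of $m_H$ from above (the clopen sets $W_{\cB' \cap \{1,\dots,K\}}$ decrease to $W_{\cB'}$) on the right yields the identity. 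Applying it to $\cB' = \widetilde{\cB}^{(N)}$ and writing $A_N := W_{\widetilde{\cB}^{(N)}}$, I note that $\widetilde{\cB}^{(N)}$ decreases in $N$, so the $A_N$ increase to $A_\infty := \bigcup_N A_N$ and $\lim_N \ddelta(\cM_{\widetilde{\cB}^{(N)}}) = 1 - m_H(A_\infty)$. The lemma thereby becomes equivalent to the $0$-$1$ statement $m_H(A_\infty) \in \{0,1\}$: if $m_H(A_\infty) = 0$ then every $A_N$ is null and $\ddelta(\cM_{\widetilde{\cB}^{(N)}}) = 1$ for all $N$, while if $m_H(A_\infty) = 1$ then the limit is $0$.

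Next I would make the product structure of $H$ explicit. Writing $M_p := \sup_{b \in \cB} v_p(b) \in \{0,1,\dots,\infty\}$ for the $p$-adic valuation $v_p$, the Chinese Remainder Theorem identifies $H = \overline{\Delta(\Z)}$ with the compact group $\prod_p \Z/p^{M_p}\Z$ (with $\Z/p^\infty\Z := \Z_p$, only primes dividing some $b \in \cB$ contributing a nontrivial factor); under this identification a point $h \in H$ corresponds to its tuple of prime components $(h_p)_p$, and $m_H$ is the corresponding product measure, so the coordinates $(h_p)_p$ are independent. The key observation is that $h_b = 0$ if and only if $h_p \equiv 0 \bmod p^{v_p(b)}$ for every prime $p \mid b$; hence the event $\{h_b \neq 0\}$, and therefore $A_N = \bigcap_{b \in \widetilde{\cB}^{(N)}} \{h_b \neq 0\}$, depends only on the coordinates $h_p$ with $p > N$, because every $b \in \widetilde{\cB}^{(N)}$ has all its prime factors $> N$ by definition of $\Spec$.

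Finally I would conclude by a tail argument: for each fixed $N_0$ one has $A_\infty = \bigcup_{N \ge N_0} A_N$, and each $A_N$ with $N \ge N_0$ lies in $\sigma(h_p : p > N) \subseteq \sigma(h_p : p > N_0)$, so $A_\infty$ belongs to the tail $\sigma$-algebra $\bigcap_{N_0} \sigma(h_p : p > N_0)$ of the independent coordinates $(h_p)_p$. Kolmogorov's $0$-$1$-law then forces $m_H(A_\infty) \in \{0,1\}$, which as noted above is exactly the asserted dichotomy. I expect the main obstacle to lie in the two preparatory steps rather than in the probabilistic core: pinning down the bridge identity with the correct interchange of limits and measures, and verifying carefully that the identification $H \cong \prod_p \Z/p^{M_p}\Z$ carries $m_H$ to the product measure and makes each $A_N$ measurable with respect to the prime coordinates above $N$. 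Once these are in place, the $0$-$1$-law applies essentially for free.
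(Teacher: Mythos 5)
Your proposal is correct, and its skeleton coincides with the paper's: both first convert the statement via the bridge identity $\ddelta(\cM_{\cB'})=1-m_H(W_{\cB'})$ (the paper quotes $m_H(W)=1-\dl(\cM_\cB)$ from \cite[Lemma~4.1]{KKL2016} together with Lemma~\ref{lemma:windows}, while you rederive it through finite truncations, \eqref{eq:app-from-inside}, and continuity of $m_H$ from above --- both routes are sound), then reduce the dichotomy to the $0$-$1$ statement $m_H(W_\infty)\in\{0,1\}$ for the increasing union $W_\infty$. Where you genuinely diverge is in the probabilistic core. The paper never decomposes $H$ as a product: it works with the $\sigma$-algebras $\Pi_{\cB'}$ generated by the coordinate variables $Z_b(h)=h_b$, derives independence of $\Pi_\cA$ and $\Pi_\cC$ for coprime $\cA,\cC$ from the generalized Chinese Remainder Theorem (cylinders over a finite index set $\cB'$ have measure $1/\lcm(\cB')$), and then proves the $0$-$1$ statement by hand: approximating $W_\infty$ within $\varepsilon$ both by some $V_1\in\Pi_{\cB^{(N_1)}}$ (using that each $\cB^{(N)}$ is finite by primitivity) and by some $V_2\in\Pi_{\cB_2'}$ with $\cB_2'\subseteq\widetilde\cB^{(N_1)}$ finite, whence $|m_H(W_\infty)-m_H(W_\infty)^2|\leqslant 4\varepsilon$. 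You instead make the isomorphism $H\cong\prod_p\Z/p^{M_p}\Z$ (with $M_p=\sup_{b\in\cB}v_p(b)$ and $\Z_p$ for $M_p=\infty$) explicit, check that Haar measure becomes the product measure, observe that $W_{\widetilde\cB^{(N)}}$ is measurable with respect to the prime coordinates $h_p$ with $p>N$, and invoke the classical Kolmogorov $0$-$1$-law for the tail $\sigma$-algebra of the independent family $(h_p)_p$. Your version buys a cleaner, off-the-shelf conclusion (and does not even need finiteness of the sets $\cB^{(N)}$), at the price of verifying the topological group isomorphism and the identification of Haar with product measure --- a standard but nontrivial check via CRT and density of $\Z$ in $\prod_p\Z/p^{M_p}\Z$; the paper's version avoids any global decomposition of $H$ and keeps all independence statements at the level of finite cylinder sets. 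Note also that your argument is distinct from the paper's second, purely arithmetic proof of the lemma due to Kasjan, which works directly with logarithmic densities via the multiplicativity identity \eqref{independent}.
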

\begin{proof}
In \cite[Lemma~4.1]{KKL2016} it was proved that $m_H(W)=1-\dl(\cM_\cB)$ and, analogously,
$m_{H'}(W')=1-\dl(\cM_{\cB'})$ for each $\cB'=\widetilde{\cB}^{(N)}$. Hence 
$\ddelta(\cM_\cB)=\dl(\cM_\cB)=1-m_H(W)$, and
Lemma~\ref{lemma:windows} implies
\begin{equation*}
\ddelta(\cM_{\cB'})=\dl(\cM_{\cB'})=1-m_{H'}(W')=1-m_H(W_{\cB'})\quad\text{for all }\cB'=\widetilde{\cB}^{(N)}.
\end{equation*}
Observing that $(W_{\widetilde\cB^{(N)}})_N$ is an increasing sequence of sets and denoting
$W_\infty:=\bigcup_{N\in\N}W_{\widetilde{\cB}^{(N)}}$, we thus conclude that
\begin{equation*}
\lim_{N\to\infty}\ddelta(\cM_{\widetilde{\cB}^{(N)}})
=
1-\lim_{N\to\infty}m_H(W_{\widetilde{\cB}^{(N)}})
=
1-m_H\left(W_\infty\right),
\end{equation*}
and, in order to prove the lemma, we must show that either $m_H(W_{\widetilde{\cB}^{(N)}})=0$ for all $N\in\N$, or $m_H\left(W_\infty\right)=1$. This will result from a variant of Kolmogorov's $0$-$1$-law.

For $b\in \cB$ define the random variable $Z_b:H\to\Z$ by $Z_b(h)=h_b$. If $\cA\subseteq\cB$ and $\cC\subseteq\cB$ are co-prime to each other, i.e. if $\gcd(a,c)=1$ for all $a\in\cA$ and all $c\in\cC$, then the families $(Z_a)_{a\in\cA}$ and $(Z_c)_{c\in\cC}$ are independent from each other.
This is a consequence of the generalized Chinese Remainder Theorem, which guarantees that each cylinder set determined by a finite index set $\cB'$ has Haar measure $1/\lcm(\cB')$.

For $\cB'\subseteq\cB$ denote by $\Pi_{\cB'}$ the $\sigma$-algebra generated by the random variables $Z_b$ $(b\in\cB')$. Then $W_\infty\in\Pi_{\widetilde{\cB}^{(N)}}$ for all $N\in\N$, because
$W_\infty=\bigcup_{N'\geqslant N}W_{\widetilde{\cB}^{(N')}}$ and
$W_{\widetilde{\cB}^{(N')}}\in\Pi_{\widetilde{\cB}^{(N')}}\subseteq\Pi_{\widetilde{\cB}^{(N)}}$ whenever $N'\geqslant N$.

Let $\varepsilon>0$. 
\begin{compactenum}[-]
\item As $\Pi_\cB$ is generated by the algebras $\Pi_{\cB^{(N)}}$ $(N\in\N)$
where $\cB^{(N)}:=\{b\in\cB:\Spec(b)\subseteq\{1,\dots,N\}\}$,
there are $N_1\in\N$ and a set
$V_1\in\Pi_{\cB^{(N_1)}}$ such that
$m_H(W_\infty\triangle V_1)<\varepsilon$. Note that all $\cB^{(N)}$ are finite, because $\cB$ is primitive \cite[Lemma~5.14]{BKKL2015}.
\item As $\Pi_{{\widetilde\cB}^{(N_1)}}$ is generated by the algebras 
$\Pi_{\cB'}$ ($\cB'\subseteq{\widetilde\cB}^{(N_1)}$ finite), 
there are a finite set $\cB_2'\subseteq\widetilde\cB^{(N_1)} $ and a set $V_2\in\Pi_{\cB'_2}$ such that
$m_H(W_\infty\triangle V_2)<\varepsilon$.
\item  As $\cB^{(N_1)}$ and $\cB'_2\subseteq{\widetilde\cB}^{(N_1)}$ are co-prime to each other,
the corresponding $\sigma$-algebras $\Pi_{\cB^{(N_1)}}$ and $\Pi_{\cB'_2}$
are independent from each other (see above), in particular $m_H(V_1\cap V_2)=m_H(V_1)\cdot m_H(V_2)$.
\end{compactenum}
 Hence
\begin{equation*}
\left|m_H(W_\infty)-m_H(W_\infty)\cdot m_H(W_\infty)\right|
\leqslant
\left|m_H(V_1\cap V_2)-m_H(V_1)\cdot m_H(V_2)\right|+4\varepsilon
=4\varepsilon\,.
\end{equation*}
As $\varepsilon>0$ was arbitrary, this shows that $m_H(W_\infty)\in\{0,1\}$.
Finally note that if $m_H(W_\infty)=0$, then also $m_H(W_{\widetilde\cB^{(N)}})=0$ for all $N$.
\end{proof}

\begin{proof}[Proof of Theorem~\ref{theo:behrend}]
(i)\; Suppose first that all $\widetilde\cB^{(N)}$ are Behrend. Then $\cB$ is Behrend, because $\widetilde\cB^{(N)}\subseteq\cB$.
If, conversely, there is a non-Behrend set $\widetilde\cB^{(N)}$, then $\cB$ is contained in the finite union $\widetilde\cB^{(N)}\cup\bigcup_{p\in\cP\cap\{1,\dots,N\}}p\cdot\Z$ of non-Behrend sets, and hence $\cB$ is not Behrend \cite[Cor.~0.14]{hall-book}.\\
(ii)\;
This follows from assertion (i) in view of Lemma~\ref{lemma:dichotomy}.
\end{proof}

Stanis\l{}aw Kasjan provided another, purely arithmetic proof of Lemma~\ref{lemma:dichotomy}. I am indebted to him for the permission to reproduce it here
 \cite{Kasjan-pc}:
\begin{proof}[Alternative proof of Lemma~\ref{lemma:dichotomy}]
First observe that if $\cA,\cC\subseteq \N$ are such that $\gcd(a,c)=1$ for every $a\in\cA$, $c\in\cC$, then 
\begin{equation}\label{independent}
\ddelta(\cM_{\cA}\cap \cM_{\cC})=\ddelta(\cM_{\cA})\cdot\ddelta(\cM_{\cC}).
\end{equation}
For finite $\cA$, $\cC$ this is proved
in \cite[Lemma 4.22]{BKKL2015}, the general case is then derived using the Davenport-Erd\"os formula \eqref{eq:app-from-inside}, 
observing that $\cM_\cA\cap\cM_\cC=\cM_{[\cA,\cC]}$ is a set of multiples, where $[\cA,\cC]:=\{\lcm(a,c):a\in\cA,c\in\cC\}$.

Assume now that $\lim_{N\rightarrow\infty}\ddelta(\cM_{\widetilde{\cB}^{(N)}})\neq 0$. Then 
\begin{equation}\label{below}
\ddelta(\cM_{\widetilde{\cB}^{(N)}})\ge \varepsilon
\end{equation}
 for every $N$ and some $\varepsilon>0$.
Note that by \eqref{eq:app-from-inside},
\begin{equation*}\label{lim1}
\lim_{N\rightarrow\infty}\ddelta(\cM_{\widetilde{\cB}^{(N)}}\setminus \cM_{\cB^{(N)}})
\leqslant\lim_{N\rightarrow\infty}\ddelta(\cM_{\cB}\setminus \cM_{\cB^{(N)}})= 0,
\end{equation*} 
and by \eqref{independent},
$$
\ddelta(\cM_{\widetilde{\cB}^{(N)}})=
\ddelta(\cM_{\widetilde{\cB}^{(N)}}\setminus \cM_{\cB^{(N)}})
+ \ddelta(\cM_{\widetilde{\cB}^{(N)}})\cdot\ddelta(\cM_{\cB^{(N)}}).
$$
Hence
\begin{equation*}
\lim_{N\to\infty}\ddelta(\cM_{\widetilde{\cB}^{(N)}})
\cdot\big(1-\ddelta(\cM_{\cB^{(N)}})\big)
=
\lim_{N\to\infty}\ddelta(\cM_{\widetilde{\cB}^{(N)}}\setminus \cM_{\cB^{(N)}})
=0.
\end{equation*}
Together with (\ref{below}) this yields
$$
\lim_{N\rightarrow\infty}(1-\delta(\cM_{\cB^{(N)}}))=0.
$$
Invoking Eq.~\eqref{eq:app-from-inside} once more,
this implies
$\delta(\cM_{\cB})=1$.  
Finally
$\delta(\cM_{\widetilde\cB^{(N)}})=1$ for every $N$ follows from Theorem~\ref{theo:behrend}(i), 
the simple proof of which is of purely number theoretic nature\footnote{The main ingredient of the proof, \cite[Cor.~0.14]{hall-book}, is a direct consequence of Behrend's inequality \cite{Behrend1948}, see also \cite{RT1996}.} and does not rely on Lemma~\ref{lemma:dichotomy}.
\end{proof}
\begin{remark}
In the present context of $\cB$-free dynamics the purely number theoretic proof is certainly the more direct (and hence preferable) one. 
Having in mind that the sets $\cF_\cB$ are very special example of model sets (see e.g.~\cite[Sec.~3.3]{KR2016} for a detailed discussion), the probabilistic proof might indicate how to use 
$0$-$1$-laws for the investigation of  more geometrically defined model sets.
\end{remark}

\section{Proof of Theorems~\ref{theo:support},~\ref{theo:heredity} and~\ref{theo:main}}
Denote by $\cP$ the set of prime numbers. Recall that $\cB^{(n)}:=\{b\in\cB:\Spec(b)\subseteq\{1,\dots,n\}\}$. For a finite set $A\subseteq\cP$ denote
$$\cB^{(A)}:=\{b\in\cB:\Spec(b)\subseteq A\}.$$
\begin{lemma}\label{lemma:prime-exhaust}
Suppose that 
the primitive set $\cB$ is taut.
Then for each finite set $A\subseteq\cP$ and each $\varepsilon>0$ there is a finite set $P\subseteq\cP$ such that
\begin{equation}
P\cap A=\emptyset\quad\text{and}\quad\ddelta(\cM_{(\cB\setminus\cB^{(A)})\setminus\cM_P})<\varepsilon.
\end{equation}
\end{lemma}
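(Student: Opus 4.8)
The plan is to reduce the whole statement to Corollary~\ref{coro:taut} by decomposing each surviving element $b\in\cB\setminus(\cB_A\cup\cM_P)$ into its \emph{$A$-smooth part} and a coprime \emph{large-prime part}, and to choose the finite prime set $P$ precisely so that this large-prime part is forced to contain no small prime factor at all. The tautness of $\cB$ will enter only through Corollary~\ref{coro:taut}, which tells us that $\ddelta(\cM_{\widetilde{\cB/q}^{(N)}})\to 0$ as $N\to\infty$ for each fixed $q$.

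First I would fix $A$ and $\varepsilon$, pick a threshold $N_0\ge\max A$ (to be specified at the end), and set $P:=\{p\in\cP:p\le N_0\}\setminus A$, which is finite and disjoint from $A$ by construction. The key structural observation is: if $b\in\cB\setminus(\cB_A\cup\cM_P)$, then (i) $b\notin\cM_P$ forces every prime factor $p\le N_0$ of $b$ to lie in $A$, while (ii) $b\notin\cB_A$ forces $b$ to have at least one prime factor $>N_0$. Writing $q=q(b)$ for the largest $N_0$-smooth divisor of $b$, assertion (i) says $q$ is $A$-smooth, and (ii) says $c:=b/q$ satisfies $c>1$ and $\Spec(c)\cap\{1,\dots,N_0\}=\emptyset$, i.e.\ $c\in\widetilde{(\cB/q)}^{(N_0)}$. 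Hence
\[
\cB\setminus(\cB_A\cup\cM_P)\subseteq\bigcup_{q\ A\text{-smooth}}q\cdot\widetilde{(\cB/q)}^{(N_0)},
\]
and, passing to sets of multiples, it suffices to bound the logarithmic density of the right-hand union, using the scaling identity $\ddelta(q\cdot\cM_S)=q^{-1}\ddelta(\cM_S)$.

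Next I would split the (infinite) index set of $A$-smooth $q$ at a level $K$. For the tail $q>K$ I do not use tautness at all: since $b$ is a genuine multiple of its $A$-smooth part $q(b)$, these elements already lie in $\cM_{\{q\ A\text{-smooth}:\,q>K\}}$, whose logarithmic density is at most $\sum_{q>K,\ A\text{-smooth}}q^{-1}$ by the elementary divisibility count $\ddelta(\cM_\cQ)\le\sum_{q\in\cQ}q^{-1}$. Because $A$ is finite, $\sum_{q\ A\text{-smooth}}q^{-1}=\prod_{p\in A}(1-p^{-1})^{-1}<\infty$, so I can fix $K$ making this tail $<\varepsilon/2$. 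For the finite head $q\le K$ there are only finitely many terms, so ordinary finite subadditivity applies, and the scaling identity gives the bound $\sum_{q\le K,\ A\text{-smooth}}q^{-1}\ddelta(\cM_{\widetilde{(\cB/q)}^{(N_0)}})$; here Corollary~\ref{coro:taut} yields $\ddelta(\cM_{\widetilde{(\cB/q)}^{(N_0)}})\to 0$ as $N_0\to\infty$ for each fixed $q$, so I choose $N_0\ge\max A$ large enough that this finite sum is $<\varepsilon/2$. Adding the two estimates gives $\ddelta(\cM_{\cB\setminus(\cB_A\cup\cM_P)})<\varepsilon$.

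The main point to get right is the interchange of the logarithmic density with the \emph{infinite} union over $A$-smooth $q$: upper logarithmic density is not countably subadditive in general, so a naive union bound over all $q$ would be illegitimate. I avoid this by handling the infinite tail through the direct divisibility bound $\ddelta(\cM_\cQ)\le\sum_{q\in\cQ}q^{-1}$, which is valid for an arbitrary set $\cQ$ because it comes from a term-by-term count of multiples rather than from abstract subadditivity, while the genuinely density-sensitive estimate is reserved for the finite head, where finite subadditivity and Corollary~\ref{coro:taut} suffice. A secondary point is the order of quantifiers: $K$ must be fixed first using only convergence of $\sum q^{-1}$ over $A$-smooth numbers, and only afterwards is $N_0$ (and hence $P$) chosen large relative to $K$.
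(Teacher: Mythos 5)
Your proof is correct and follows essentially the same route as the paper: the same choice $P=(\cP\cap\{1,\dots,N_0\})\setminus A$, the same decomposition of each surviving $b$ into an $A$-smooth part $q$ times an element of $\widetilde{(\cB/q)}^{(N_0)}$, and the same appeal to Corollary~\ref{coro:taut} for finitely many $q$ together with a cheap bound for the remaining smooth parts. The only (cosmetic) difference is how the infinite family of $A$-smooth $q$ is truncated: the paper caps the exponents ($k_p<L$, with tail absorbed into $\bigcup_{p\in A}p^L\Z$ at cost $\sum_{p\in A}p^{-L}$), while you cap the size ($q\le K$, with tail bounded by $\sum_{q>K}q^{-1}$ using convergence of $\prod_{p\in A}(1-p^{-1})^{-1}$), both of which validly avoid the countable-subadditivity issue you correctly flag.
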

\begin{proof}
Denote $a:=\card A$ and $K:=\sum_{p\in A}\frac{1}{p}$. Choose $L\in\N$ large enough that $\sum_{p\in A}\frac{1}{p^L}<\varepsilon/2$ and let
\begin{equation*}
Q:=\left\{\prod_{p\in A}p^{k_p}:k_p\in\N_0\right\}
\quad\text{and}\quad
Q_0:=\left\{\prod_{p\in A}p^{k_p}\in Q: k_p<L\ (p\in A)\right\}.
\end{equation*}
In view of Corollary~\ref{coro:taut}, we can fix $N\in\N$ large enough that $\ddelta(\cM_{\widetilde{\cB/q}^{(N)}})<\varepsilon/(2L^a)$ for all $q\in Q_0$.

Let $P:=\big(\cP\cap\{1,\dots,N\}\big)\setminus A$. Then
\begin{equation*}
\begin{split}
\cB\setminus(\cB^{(A)}\cup\cM_P)
&\subseteq
\bigcup_{q\in Q}q\cdot\widetilde{\cB/q}^{(N)}
\subseteq
\bigcup_{q\in Q_0}q\cdot\widetilde{\cB/q}^{(N)}
\cup\bigcup_{q\in Q\setminus Q_0}q\cdot\Z\\
&\subseteq
\bigcup_{q\in Q_0}q\cdot\widetilde{\cB/q}^{(N)}
\cup\bigcup_{p\in A}p^L\cdot\Z\,,
\end{split}
\end{equation*}
so that
\begin{equation*}
\cM_{\cB\setminus(\cB^{(A)}\cup\cM_P)}
\subseteq
\bigcup_{q\in Q_0}q\cdot\cM_{\widetilde{\cB/q}^{(N)}}
\cup\bigcup_{p\in A}p^L\cdot\Z\,.
\end{equation*}
Hence
\begin{equation*}
\ddelta(\cM_{\cB\setminus(\cB^{(A)}\cup\cM_P)})
\leqslant
\sum_{q\in Q_0}\frac{1}{q}\,\ddelta(\cM_{\widetilde{\cB/q}^{(N)}})+\sum_{p\in A}\frac{1}{p^L}
\leqslant
\card Q_0\cdot\frac{\varepsilon}{2L^a}+\frac\varepsilon2
=
\varepsilon\,.
\end{equation*}
\end{proof}

Next we prove a strengthening of Lemma~5.20 from \cite{BKKL2015}.
\begin{lemma}\label{lemma:as5.20}
Let $\beta,r,n\in\N$ and $\cC\subseteq\N$. Assume that $P\subseteq\{n+1,n+2,\dots\}$ is a finite set of prime numbers co-prime to $\beta$. Then the logarithmic density $\ddelta\left((\beta\Z+r)\cap\bigcap_{i=1}^n(\cF_\cC-i)\right)$ exists
and
\begin{equation}\label{eq:new-label}
\ddelta\left((\beta\Z+r)\cap\bigcap_{i=1}^n(\cF_\cC-i)\right)
\geqslant
\prod_{p\in P}\left(1-\frac{n}{p}\right)\cdot 
\ddelta\left((\beta\Z+r)\cap\bigcap_{i=1}^n(\cF_{\cC\setminus\cM_P}-i)\right).
\end{equation}
\end{lemma}
\begin{proof}
We start with the existence of the logarithmic density. Denote by $\uddelta$ and $\oddelta$ the lower and upper logarithmic density, respectively. 
{Both are obviously monotone, invariant under shifting the sequence, and $\oddelta$ is finitely subadditive.}
\red{For any sets $U,V\subseteq\N$ they satisfy $\uddelta(U\cup V)\leqslant\uddelta(U)+\oddelta(V)$, and if $U$ and $V$ are disjoint, then also $\uddelta(U)+\oddelta(V)\leqslant\oddelta(U\cup V)$. (These are elementary consequences of the ``sum rule'' for $\limsup$ and $\liminf$.)}
 Hence, for $M\in\N$,
\begin{equation*}
\begin{split}
0\leqslant&
\ddelta\left((\beta\Z+r)\cap\bigcap_{i=1}^n(\cF_{\cC\cap\{1,\dots,M\}}-i)\right)
-
\uddelta\left((\beta\Z+r)\cap\bigcap_{i=1}^n(\cF_{\cC}-i)\right)\\
=&
\red{\uddelta}\left(\beta\Z\setminus\bigcup_{i=1}^{n}(\cM_{\cC\cap\{1,\dots,M\}}-i-r)\right)
-
\uddelta\left(\beta\Z\setminus\bigcup_{i=1}^{n}(\cM_{\cC}-i-r)\right)\\
\leqslant&
\oddelta\left(\left(\bigcup_{i=1}^{n}(\cM_{\cC}-i-r)\right)\Big\backslash
\left(\bigcup_{i=1}^{n}(\cM_{\cC\cap\{1,\dots,M\}}-i-r)\right)\right)\\
\leqslant&
\sum_{i=1}^n\oddelta\left((\cM_{\cC}-i-r)\setminus
(\cM_{\cC\cap\{1,\dots,M\}}-i-r)\right)\\
\leqslant&
\sum_{i=1}^n\Big(\oddelta\left(\cM_{\cC}-i-r\right)
-\uddelta\left(\cM_{\cC\cap\{1,\dots,M\}}-i-r\right)\Big)\\
=&
\sum_{i=1}^n\Big(\ddelta\left(\cM_{\cC}\right)
-\ddelta\left(\cM_{\cC\cap\{1,\dots,M\}}\right)\Big),
\end{split}
\end{equation*}
\red{}
It follows that
\begin{equation*}
\begin{split}
\oddelta\left((\beta\Z+r)\cap\bigcap_{i=1}^n(\cF_{\cC}-i)\right)
&\leqslant
\oddelta\left((\beta\Z+r)\cap\bigcap_{i=1}^n(\cF_{\cC\cap\{1,\dots,M\}}-i)\right)
=
\ddelta\left((\beta\Z+r)\cap\bigcap_{i=1}^n(\cF_{\cC\cap\{1,\dots,M\}}-i)\right)\\
&\leqslant
\uddelta\left((\beta\Z+r)\cap\bigcap_{i=1}^n(\cF_{\cC}-i)\right)
+
{n\cdot}\Big(\ddelta\left(\cM_{\cC}\right)
-\ddelta\left(\cM_{\cC\cap\{1,\dots,M\}}\right)\Big).
\end{split}
\end{equation*}
Observing equation \eqref{eq:app-from-inside}, this yields in the limit $M\to\infty$ that
\begin{equation*}
\ddelta\left((\beta\Z+r)\cap\bigcap_{i=1}^n(\cF_{\cC}-i)\right)
=
\lim_{M\to\infty}\ddelta\left((\beta\Z+r)\cap\bigcap_{i=1}^n(\cF_{\cC\cap\{1,\dots,M\}}-i)\right).
\end{equation*}

We turn to the proof of inequality \eqref{eq:new-label}.
From \cite[Lemma 5.18]{BKKL2015} we know that
\begin{equation}\label{eq:approximation-new}
d\left((\beta\Z+r)\cap\bigcap_{i=1}^n(\cF_{\cC\cap\{1,\dots,M\}}-i)\right)
\geqslant
\left(1-\frac{n}{p}\right)\cdot 
d\left((\beta\Z+r)\cap\bigcap_{i=1}^n(\cF_{(\cC\setminus p\Z)\cap\{1,\dots,M\}}-i)\right)
\end{equation}
for each $p\in P$. Applying this inductively to all $p\in P$ (replacing $\cC$ by $\cC\setminus p\Z$ etc.), this yields
\begin{equation*}
d\left((\beta\Z+r)\cap\bigcap_{i=1}^n(\cF_{\cC\cap\{1,\dots,M\}}-i)\right)
\geqslant
\prod_{p\in P}\left(1-\frac{n}{p}\right)\cdot 
d\left((\beta\Z+r)\cap\bigcap_{i=1}^n(\cF_{(\cC\setminus \cM_P)\cap\{1,\dots,M\}}-i)\right),
\end{equation*}
and the same holds, of course, for the logarithmic density $\ddelta$. As the (logarithmic) density is monotone, we obtain
\begin{equation*}
\ddelta\left((\beta\Z+r)\cap\bigcap_{i=1}^n(\cF_{\cC\cap\{1,\dots,M\}}-i)\right)
\geqslant
\prod_{p\in P}\left(1-\frac{n}{p}\right)\cdot 
\ddelta\left((\beta\Z+r)\cap\bigcap_{i=1}^n(\cF_{(\cC\setminus \cM_P)}-i)\right)
\end{equation*}
for all $M\in\N$. An application of \eqref{eq:approximation-new} finishes the proof of the lemma.
\end{proof}

\begin{proposition}
\label{prop:to5.20}
Let $\beta,r,n\in\N$, assume that the primitive set $\cB\subseteq\N$ is taut and denote
$A:=\Spec(\beta)\cup(\cP\cap\{1,\dots,n\})$. Then
\begin{equation}\label{eq:to5.20}
\ddelta\left((\beta\Z+r)\cap\bigcap_{i=1}^n(\cF_{\cB\setminus\cB^{(A)}}-i)\right)>0\,.
\end{equation}
\end{proposition}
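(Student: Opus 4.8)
The plan is to combine the two preceding lemmas. Lemma~\ref{lemma:as5.20} lets me trade the full set $\cB\setminus\cB_A$ for the much thinner set $(\cB\setminus\cB_A)\setminus\cM_P$, at the cost of the harmless factor $\prod_{p\in P}(1-n/p)$, and Lemma~\ref{lemma:prime-exhaust} (whose conclusion concerns exactly $\cM_{\cB\setminus(\cB_A\cup\cM_P)}=\cM_{(\cB\setminus\cB_A)\setminus\cM_P}$) guarantees that this thinner set has a set of multiples of arbitrarily small logarithmic density. Writing $\cC:=\cB\setminus\cB_A$, I would first fix $\varepsilon\in(0,\tfrac{1}{n\beta})$ and produce a finite set $P$ of primes that is coprime to $\beta$, that consists only of primes exceeding $n$, and that satisfies $\ddelta(\cM_{\cC\setminus\cM_P})<\varepsilon$. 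Granting such a $P$, Lemma~\ref{lemma:as5.20} applied with this $\cC$ gives
\begin{equation*}
\ddelta\Big((\beta\Z+r)\cap\bigcap_{i=1}^n(\cF_{\cC}-i)\Big)
\;\geqslant\;
\prod_{p\in P}\Big(1-\frac np\Big)\cdot
\ddelta\Big((\beta\Z+r)\cap\bigcap_{i=1}^n(\cF_{\cC\setminus\cM_P}-i)\Big),
\end{equation*}
and since $P$ is finite with every $p>n$ the product is strictly positive, so it remains only to bound the second factor away from $0$.

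For that factor I would argue directly. Since $\bigcap_{i=1}^n(\cF_{\cC\setminus\cM_P}-i)=\Z\setminus\bigcup_{i=1}^n(\cM_{\cC\setminus\cM_P}-i)$, the pointwise inequality $\1_{S\setminus T}\geqslant\1_S-\1_T$ together with finite subadditivity and shift-invariance of the logarithmic density yields, for the lower logarithmic density,
\begin{equation*}
\ddelta\Big((\beta\Z+r)\cap\bigcap_{i=1}^n(\cF_{\cC\setminus\cM_P}-i)\Big)
\;\geqslant\;
\ddelta(\beta\Z+r)-\sum_{i=1}^n\ddelta(\cM_{\cC\setminus\cM_P}-i)
\;=\;
\frac1\beta-n\,\ddelta(\cM_{\cC\setminus\cM_P})
\;>\;\frac1\beta-n\varepsilon\;>\;0,
\end{equation*}
where I use that $\ddelta(\cM_{\cC\setminus\cM_P})$ exists by Davenport--Erd\"os and is shift-invariant, and that $\ddelta(\beta\Z+r)=1/\beta$. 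Multiplying the two positive factors then proves \eqref{eq:to5.20}.

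The step I expect to be delicate is producing a set $P$ with all three properties at once, because Lemma~\ref{lemma:as5.20} insists that $P$ avoid every prime $\leqslant n$, whereas the set furnished by Lemma~\ref{lemma:prime-exhaust} is a priori only required to avoid $A$. Simply deleting the small primes from $P$ would enlarge $\cC\setminus\cM_P$ and could destroy the density estimate, so instead I would re-run the proof of Lemma~\ref{lemma:prime-exhaust} with $A$ replaced, in the definition of the smooth-number set $Q$, by $A\cup(\cP\cap\{1,\dots,n\})$, while keeping $P:=(\cP\cap\{n+1,\dots,N\})\setminus A$. The same factorisation of each $b\in\cB\setminus(\cB_A\cup\cM_P)$ into its $\big(A\cup(\cP\cap\{1,\dots,n\})\big)$-part $q$ and a complementary factor $b/q$ whose prime divisors now all exceed $N$ then shows
\begin{equation*}
\cM_{\cC\setminus\cM_P}\;\subseteq\;\bigcup_{q}q\,\cM_{\widetilde{\cB/q}^{(N)}}\;\cup\;\bigcup_{p}p^{L}\Z,
\end{equation*}
with $q$ running over the $\big(A\cup(\cP\cap\{1,\dots,n\})\big)$-smooth numbers with all exponents $<L$ and $p\in A\cup(\cP\cap\{1,\dots,n\})$. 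Corollary~\ref{coro:taut} supplies $\ddelta(\cM_{\widetilde{\cB/q}^{(N)}})\to0$ for every $q$ (in particular for the finitely many $q$ occurring here), so each of the finitely many leading contributions is made $<\varepsilon$ by choosing $N$ large, while the tail $\sum_p p^{-L}$ is controlled by taking $L$ large, exactly as in the original argument.

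Finally, I would record that the limiting manipulations are legitimate: shift-invariance and existence of the logarithmic density of a set of multiples are what permit the additive bound in the second display, and the passage from truncated to full sets of multiples (implicit in re-running Lemma~\ref{lemma:prime-exhaust}) rests on the continuity-from-inside property \eqref{eq:app-from-inside}. The only genuinely new input beyond bookkeeping is the observation that enlarging the smooth base from $A$ to $A\cup(\cP\cap\{1,\dots,n\})$ forces $P$ into the large primes without spoiling the density control, and this is precisely where tautness, through Corollary~\ref{coro:taut}, is used.
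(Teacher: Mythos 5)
You have reproduced the paper's own strategy exactly: Lemma~\ref{lemma:prime-exhaust} with $\varepsilon$ of order $1/(n\beta)$, then Lemma~\ref{lemma:as5.20}, then the subadditivity bound $\ddelta(\beta\Z+r)-n\varepsilon=1/\beta-n\varepsilon>0$ (the paper takes $\varepsilon=\frac{1}{2n\beta}$). To your credit, you also noticed the one point the paper silently skips: Lemma~\ref{lemma:as5.20} requires $P\subseteq\{n+1,n+2,\dots\}$, whereas Lemma~\ref{lemma:prime-exhaust} constructs $P=(\cP\cap\{1,\dots,N\})\setminus A$, which in general contains primes $p\leqslant n$, for which the factor $1-n/p$ is $\leqslant 0$ and the lemma is useless. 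The paper applies Lemma~\ref{lemma:as5.20} without verifying this hypothesis, so the difficulty you flag is real and not an artifact of your reading.

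However, your repair does not close the gap, and in fact no repair can, because the proposition is false as stated. In your modified run of Lemma~\ref{lemma:prime-exhaust}, with smooth base $A':=A\cup(\cP\cap\{1,\dots,n\})$ and $P\subseteq\cP\cap\{n+1,\dots,N\}$, any $b\in\cB\setminus\cB_A$ \emph{all} of whose prime factors lie in $A'$ survives in $\cC\setminus\cM_P$ (no prime of $P$ divides it), and in your factorisation it has smooth part $q=b$ and cofactor $b/q=1$. Since $\Spec(1)=\emptyset$, we get $1\in\widetilde{\cB/q}^{(N)}$ for every $N$ (indeed $\cB/b=\{1\}$ by primitivity), so $\ddelta(\cM_{\widetilde{\cB/q}^{(N)}})=1$ for all $N$ and Corollary~\ref{coro:taut} gives nothing for these $q$; each such $b$ forces $\ddelta(\cM_{\cC\setminus\cM_P})\geqslant 1/b$ for every admissible $P$, so the density cannot be made arbitrarily small whenever the finite set $\cB_{A'}\setminus\cB_A$ is nonempty. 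This obstruction is genuine: take $\cB=\{2\}$ (primitive and taut), $\beta=3$, so $A=\{3\}$ and $\cB_A=\emptyset$, and $n=2$; then $\bigcap_{i=1}^{2}(\cF_{\{2\}}-i)=\emptyset$, because $k+1$ and $k+2$ cannot both be odd, and the density in \eqref{eq:to5.20} equals $0$, contradicting the stated conclusion. So the proposition needs an additional hypothesis tying $n$ to $A$, for instance $\cP\cap\{1,\dots,n\}\subseteq A$ (every prime $\leqslant n$ divides $\beta$); under that hypothesis $\cB_{A'}=\cB_A$, the set $P$ from Lemma~\ref{lemma:prime-exhaust} automatically consists of primes $>n$ coprime to $\beta$, and both the paper's argument and the first two paragraphs of yours go through verbatim --- though the application in Proposition~\ref{prop:as5.11}, where $\beta=\lcm(\cA)$ need not be divisible by all primes $\leqslant n$, would then require a corresponding adjustment, for example a separate congruence argument for the finitely many elements of $\cB_{A'}\setminus\cB_A$.
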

\begin{proof}
Apply Lemma~\ref{lemma:prime-exhaust} with 
 $\varepsilon:=\frac{1}{2n\beta}$. This produces a finite set $P\subseteq\cP\setminus A$, hence co-prime to $\beta$, with $\ddelta(\cM_{\cB\setminus(\cB^{(A)}\cup\cM_P)})<\varepsilon$. Hence
\begin{equation*}
\ddelta\left((\beta\Z+r)\Big\backslash\bigcap_{i=1}^n(\cF_{\cB\setminus(\cB^{(A)}\cup\cM_P)}-i)\right)
\leqslant
\ddelta\left(\bigcup_{i=1}^n(\cM_{\cB\setminus(\cB^{(A)}\cup\cM_P)}-i)\right)
\leqslant
\sum_{i=1}^n\ddelta\left(\cM_{\cB\setminus(\cB^{(A)}\cup\cM_P)}-i\right)
<n\varepsilon=\frac{1}{2\beta}.
\end{equation*}
Combining this with Lemma~\ref{lemma:as5.20} (applied with $\cC=\cB\setminus\cB^{(A)}$) yields
\begin{equation*}
\begin{split}
\ddelta\left((\beta\Z+r)\cap\bigcap_{i=1}^n(\cF_{\cB\setminus\cB^{(A)}}-i)\right)
&\geqslant
\prod_{p\in P}\left(1-\frac{n}{p}\right)\cdot 
\ddelta\left((\beta\Z+r)\cap\bigcap_{i=1}^n(\cF_{\cB\setminus(\cB^{(A)}\cup\cM_P)}-i)\right)\\
&=
\prod_{p\in P}\left(1-\frac{n}{p}\right)\cdot \left(\ddelta(\beta\Z+r)-\ddelta\left((\beta\Z+r)\Big\backslash\bigcap_{i=1}^n(\cF_{\cB\setminus(\cB^{(A)}\cup\cM_P)}-i)\right)
\right)\\
&\geqslant
\frac{1}{2\beta}\cdot\prod_{p\in P}\left(1-\frac{n}{p}\right)>0\,.
\end{split}
\end{equation*}
\end{proof}

Next we turn to Proposition~5.11 of \cite{BKKL2015} and provide a proof of the same assertion under the sole assumption that the  set $\cB$ is taut. 
\begin{proposition}\label{prop:as5.11}
Assume that the primitive set $\cB$ is taut and that $\cB^{(n)}\subseteq\cA\subseteq\cB$ for some $n>0$.
Suppose that 
\begin{equation}\label{eq:5.11-ass}
\{r+1,\dots,r+n\}\cap\cM_\cA=r+I\text{ for some }r\in\N\text{ and some set }I\subseteq\{1,\dots,n\}.
\end{equation}
Then the density of the set of all $k\in\N$ for which
\begin{equation*}
\{k+1,\dots,k+n\}\cap\cM_\cB=k+I
\end{equation*}
is strictly positive.
\end{proposition}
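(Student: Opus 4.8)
The plan is to realize the target pattern $I$ as the trace on a window of a suitable \emph{finite, periodic} subset of $\cB$, and then to use Proposition~\ref{prop:to5.20} to guarantee that all remaining (``large'') elements of $\cB$ contribute no extra hits on a set of $k$ of positive density. The whole difficulty will be concentrated in an elementary arithmetic identity at the end.

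First I set $J:=\{i\in\{1,\dots,n\}: r+i\in\cM_{\cB^{(n)}}\}$; since $\cB^{(n)}\subseteq\cA$ we have $J\subseteq I$. For each $i\in I\setminus J$ I fix a witness $a_i\in\cA$ with $a_i\mid r+i$, and since $r+i\notin\cM_{\cB^{(n)}}$ necessarily $a_i\notin\cB^{(n)}$, so $a_i$ has a prime factor exceeding $n$. I then put $A:=\{p\in\cP: p\le n\}\cup\bigcup_{i\in I\setminus J}\Spec(a_i)$, a finite set of primes, and choose $\beta:=\prod_{p\in A}p^{E}$ with $E$ so large that every element of $\cB_A$ divides $\beta$; this is possible because $\cB_A\subseteq\cB^{(\max A)}$ is finite \cite[Lemma~5.14]{BKKL2015}. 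With this choice $\Spec(\beta)=A$ and $\cB_A=\{b\in\cB: b\mid\beta\}$, so $\cB_A$ is periodic modulo $\beta$.

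Next I apply Proposition~\ref{prop:to5.20} to these $\beta,r,n$. As $\cB_{\Spec(\beta)}=\cB_A$, it produces a set of $k\equiv r\pmod\beta$ of positive (logarithmic) density on which $k+i\in\cF_{\cB\setminus\cB_A}$ for every $i\in\{1,\dots,n\}$, i.e.\ no element of $\cB\setminus\cB_A$ meets the window. For any such $k$ one then has $\{k+1,\dots,k+n\}\cap\cM_\cB=\{k+1,\dots,k+n\}\cap\cM_{\cB_A}=k+J_A$, where $J_A:=\{i: r+i\in\cM_{\cB_A}\}$ and the last equality uses periodicity together with $k\equiv r\pmod\beta$. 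Everything thus reduces to proving the identity $J_A=I$, after which monotonicity of the logarithmic density gives the claim.

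The inclusion $I\subseteq J_A$ is immediate, since positions in $J$ are hit by $\cB^{(n)}\subseteq\cB_A$ and each $i\in I\setminus J$ is hit by $a_i\in\cB_A$. The reverse inclusion $J_A\subseteq I$ is the main obstacle, and it is exactly here that the hypothesis $\cB^{(n)}\subseteq\cA$ and the window length $n$ must interact to exclude ``parasitic'' hits from elements of $\cB_A\setminus\cA$. My plan for it is this: if $i_0\in J_A$ is witnessed by $b\in\cB_A$ with $b\mid r+i_0$, then either $b\in\cA$, whence $i_0\in I$, or $b\notin\cB^{(n)}$, so $b$ carries a prime $q>n$; being a prime of an element of $\cB_A$, this $q$ lies in some $\Spec(a_{i'})$ with $i'\in I\setminus J$, and then $q\mid b\mid r+i_0$ together with $q\mid a_{i'}\mid r+i'$ forces $q\mid(i_0-i')$. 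Since $\lvert i_0-i'\rvert<n<q$, this gives $i_0=i'\in I$. In other words, the single-hit property of primes larger than $n$ inside a window of length $n$ is what prevents any element of $\cB_A$ from hitting outside $I$, and this is the step I expect to require the most care to state precisely.
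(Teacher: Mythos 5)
Your proof is correct and takes essentially the same route as the paper's: both reduce matters to Proposition~\ref{prop:to5.20} with a modulus $\beta$ whose spectrum $A$ collects the small primes together with the primes of witnesses $a_{i}\mid r+i$ for $i\in I$, and both rest on the same arithmetic heart, namely that a prime $q>n$ attached to a witness can divide $r+i_0$ (modulo $\beta$) only at the single position $i_0=i'$ of its witness. Your choice of $\beta$ as a high power product making $\cM_{\cB_A}$ exactly $\beta$-periodic, so that the pattern identity $J_A=I$ can be verified once at $r$, is a harmless cosmetic variant of the paper's $\beta=\lcm(\cA)$ together with its three-way split of $\cB$ into $\cB^{(n)}$, $\cB_A\setminus\cB^{(n)}$ and $\cB\setminus\cB_A$.
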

\begin{proof} 
The proof is strongly inspired by the proof of Proposition~5.11 in \cite{BKKL2015}: For $u\in I$ we choose $b_u\in\cB$ such that $b_{u}\mid r+u$. Without loss of generality we may assume that $\cA=\{{b_{u}}:u\in I\}\cup\cB^{(n)}$. Then, by {\cite[Lemma~5.14]{BKKL2015}}, $\cA$ is finite, and we set $\beta:=\lcm(\cA)$. 

By definition of the set $\cA$, we have for all $i\in\{1,\dots,n\}$
\begin{equation*}
i\in I
\Leftrightarrow
r+i\in\cM_\cA
{\Rightarrow}
{b_{i}}\mid r+i\,.
\end{equation*}
Let $i\in\{1,\dots,n\}$.

If $i\in I$, then ${b_{i}}\mid r+i$, i.e. $r+i\in {b_{i}}\Z$. As ${b_{i}}\mid\lcm(\cA)=\beta$, it follows that $r+\beta\Z+i\subseteq {b_{i}}\Z\subseteq\cM_\cB$. 
Hence
\begin{equation}\label{eq:reduction-1}
\begin{split}
&\Big\{k\in r+\beta\Z:\{k+1,\dots,k+n\}\cap\cM_\cB=k+I\Big\}\\
=&
\Big\{k\in r+\beta\Z: k\in\bigcap_{i\in I}(\cM_\cB-i)\cap
\bigcap_{i\in\{1,\dots,n\}\setminus I}(\cF_\cB-i)\Big\}\\
=&
\Big\{k\in r+\beta\Z: k\in
\bigcap_{i\in\{1,\dots,n\}\setminus I}(\cF_\cB-i)\Big\}\\
=&
(r+\beta\Z)\cap\bigcap_{i\in\{1,\dots,n\}\setminus I}(\cF_{\cB}-i).
\end{split}
\end{equation}

Denote {$A:=\Spec(\beta)\cup(\cP\cap\{1,\dots,n\})$} and notice that
\begin{equation*}
\cB^{(A)}=\big\{b\in\cB:\Spec(b)\subseteq\Spec(\beta){\cup\{1,\dots,n\}}\big\}
\end{equation*}
is finite {\cite[Lemma 5.14]{BKKL2015}}. 
As $\Spec(\cB^{(n)})\subseteq\Spec(\cA)=\Spec(\beta)$, we have $\cB^{(n)}\subseteq\cB^{(A)}$.
Let $b\in \cB^{(A)}\setminus\cB^{(n)}$. As $b\not\in\cB^{(n)}$, 
{$b$ has a prime divisor $p>n$, and as $b\in\cB^{(A)}$, this $p$ divides $\beta$, whence $p\mid {b_{u}}$
for some $u\in I$.}
It follows that if
$b\mid r+\beta \ell+i$ for some $1\leqslant i\leqslant n$ and $\ell\in\Z$, then 
$p\mid\big((r+\beta\ell+i)-\beta\ell-(r+u)\big)$, because $p\mid {b_{u}}\mid r+u$. This implies $p\mid i-u$, 
so that $i=u\in I$, because $p>n>|i-u|$. Thus we have shown that 
if $b\mid r+\beta \ell+i$ for some $b\in \cB^{(A)}\setminus\cB^{(n)}$, {$\ell\in\Z$ and $1\leqslant i\leqslant n$}, then $i\in I$.
Equivalently, if $i\in\{1,\dots,n\}\setminus I$, then $r+\beta\Z+i\subseteq\cF_{\cB^{(A)}\setminus\cB^{(n)}}$.
Hence, $r+\beta\Z\subseteq\bigcap_{i\in\{1,\dots,n\}\setminus I}(\cF_{\cB^{(A)}\setminus\cB^{(n)}}-i)$,  
and we can continue the chain of identities from \eqref{eq:reduction-1} by
\begin{equation}
\begin{split}
=&
(r+\beta\Z)\cap\bigcap_{i\in\{1,\dots,n\}\setminus I}(\cF_{\cB^{(n)}}-i)
\cap\bigcap_{i\in\{1,\dots,n\}\setminus I}(\cF_{\cB^{(A)}\setminus\cB^{(n)}}-i)\cap
\bigcap_{i\in\{1,\dots,n\}\setminus I}(\cF_{\cB\setminus \cB^{(A)}}-i)\\
=&
(r+\beta\Z)\cap\bigcap_{i\in\{1,\dots,n\}\setminus I}(\cF_{\cB^{(n)}}-i)\cap\bigcap_{i\in\{1,\dots,n\}\setminus I}(\cF_{\cB\setminus \cB^{(A)}}-i).
\end{split}
\end{equation}
Finally, if $r+\beta\ell+i\in\cM_{\cB^{(n)}}$ for some $\ell\in\Z$, then 
there is $b\in\cB^{(n)}\subseteq\cA$ such that $b\mid r+\beta\ell+i$ and $b\mid\beta$.
Hence $b\mid r+i$, so that $i\in I$. Equivalently, if $i\in\{1,\dots,n\}\setminus I$, then $r+\beta\Z+i\subseteq\cF_{\cB^{(n)}}$, and we can finish the above identities by
\begin{equation}
\begin{split}
=&
(r+\beta\Z)\cap\bigcap_{i\in\{1,\dots,n\}\setminus I}(\cF_{\cB\setminus \cB^{(A)}}-i)
\supseteq 
(r+\beta\Z)\cap\bigcap_{i\in\{1,\dots,n\}}(\cF_{\cB\setminus \cB^{(A)}}-i).
\end{split}
\end{equation}
In view of Proposition~\ref{prop:to5.20}, the logarithmic density of the latter set is strictly positive. This finishes the proof of the proposition.
\end{proof}
\begin{proof}[Proof of Theorem~\ref{theo:support}]
We must show that $X_\eta\subseteq\supp(\nu_\eta)$ or, equivalently, that each block $(\eta_{r+1},\dots,\eta_{r+n})$ occurs in $\eta$ with strictly positive frequency (observe that $\eta$ is quasi-generic for $\nu_\eta$). But this is just a rewording of Proposition~\ref{prop:as5.11}.
\end{proof}

\begin{proof}[Proof of Theorem~\ref{theo:heredity}]
The heredity of $X_\eta$ was proved in \cite[sec.~5]{BKKL2015} under the additional assumption that $\cB$ has light tails. This assumption enters the proof only via Proposition~5.11 of that reference, so replacing it by our Proposition~\ref{prop:as5.11} leads to the heredity of $X_\eta$ under the present assumptions.
\end{proof}

\begin{proof}[Proof of Theorem~\ref{theo:main}]
The identity $X_\varphi=X_\eta$ was proved in \cite[Prop.~2.2]{KKL2016} under the assumption that $\cB$ has light tails. Again, this assumption entered only 
via a reference to Proposition~5.11 from \cite{BKKL2015}, which, once more, can be replaced by the present Proposition~\ref{prop:as5.11}. The identity $\supp(\nu_\eta)=X_\eta$ was proved in Theorem~\ref{theo:support}.
\end{proof}

\renewcommand{\em}{\it}

\end{document}